
\documentclass[12pt, letterpaper]{article} 

\usepackage{amsmath,amsfonts,amssymb,mathtools}

\usepackage{remexpps,pproof} 

\usepackage[all]{xy}
\SelectTips{cm}{}



\def\today{\number\day\space\ifcase\month\or
   January\or February\or March\or April\or May\or June\or
   July\or August\or September\or October\or November\or December\fi 
   \space\number\year}



\usepackage{accents}
\newcommand{\pbar}[1]{\begingroup\let\hidewidth\relax%
\accentset{\hrulefill}{#1}\endgroup}
\renewcommand{\bar}[1]{\pbar{#1}}
   

\newtheorem{theorem}{Theorem}[section]

\newtheorem{lemma}[theorem]{Lemma}

\newtheorem{corollary}[theorem]{Corollary}
\newremark{definition}{Definition}[section]
\newremark{example}{Example}[section]
\newremark{remark}{Remark} [section]

\usepackage{mathrsfs}

\let\mathscr=\relax

\usepackage{eucal}
\let\escr=\mathcal
\let\mathcal=\relax

\usepackage[all]{xy}
\SelectTips{cm}{}

\arraycolsep .2em
   

\newcommand{\abs}[1]{\left\vert #1 \right\vert}

\newcommand{\br}[2]{\left[#1,#2\right]}
\newcommand{\bre}{\br{\ }{\,}}

\newcommand{\C}{\mathbb{C}}

\newcommand{\dg}{\dot{\g}}
\newcommand{\DGS}{D{\kern-.375em}G{\kern-.2em}S}

\newcommand{\ds}{\oplus}

\newcommand{\End}{\mathrm{End}}

\newcommand{\g}{\gamma}

\newcommand{\id}{\mathrm{Id}}

\newcommand{\inp}[2]{\left\langle #1, #2 \right\rangle}
\newcommand{\inpe}{\inp{\ }{\,}}
\newcommand{\Isa}{\Iso^\mathrm{aut}}

\newcommand{\Iso}{\mathrm{Iso}}

\newcommand{\Isp}{\Iso^\mathrm{spl}}

\newcommand{\lah}{\lal{h}}
\newcommand{\lal}[1]{\mathfrak{#1}}
\newcommand{\lav}{\lal{v}}
\newcommand{\laz}{\lal{z}}
\newcommand{\n}{\lal{n}}
\newcommand{\nin}{\noindent}
\newcommand{\norm}[1]{\left\Vert #1 \right\Vert}
\newcommand{\norme}{\norm{\cdot}}

\newcommand{\R}{\mathbb{R}}

\newcommand{\rc}{\mathrm{Rc}}

\newcommand{\ric}[2]{\mathrm{Ric}(#1,#2)}

\newcommand{\scp}[2]{\left(#1,#2\right)}
\newcommand{\scpe}{\scp{\ }{\,}}
\newcommand{\scpp}[2]{\scp{#1}{#2}_\vp}
\newcommand{\scppe}{\scpe_\vp}

\renewcommand{\span}[1]{\text{span}\left\{ #1 \right\}}
\newcommand{\surj}{\rightarrow\kern-.82em\rightarrow}

\newcommand{\ve}{\varepsilon}

\newcommand{\vp}{\varphi}

\makeatletter
\newcommand{\ad}[1]{\mathop{\operator@font ad}\nolimits_{#1}}
\newcommand{\add}[1]{\mathop{\operator@font ad}^\dagger\nolimits_{#1}}
\newcommand{\adj}[1]{\mathop{\operator@font ad}^*\nolimits_{#1}}
\newcommand{\del}[1]{\mathop{\nabla}\nolimits_{\mspace{-2.5 mu} #1}}
\newcommand{\tr}[1]{\mathop{\operator@font tr}\nolimits\left(#1\right)}
\makeatother


\begin{document}
\thispagestyle{empty}

\nin {\Huge
\bf {\sf
Pseudo-Riemannian Lie groups \\[0.5ex] of modified $H$-type
}}

\vspace{1.5 ex}

\nin Justin M. Ryan%

\




\nin \begin{quote} {\small 
{\bf Abstract.}
We define a class of Riemannian and pseudo-Riemannian 2-step nilpotent
Lie groups with nondegenerate centers (Definition \ref{gh}) that generalize the
$H$-type groups of Kaplan \cite{K1,K2,K3}. Examples are given and geometric
properties are investigated. 
}
\end{quote}

\hfill


\section{Introduction}


Let $N$ be a 2-step nilpotent Lie group with Lie algebra $\n$. Let $\bre$ denote the
Lie bracket on $\n$, and write $\n = \laz \ds \lav$, where $\laz$ is the center of 
$\n$. Let $\inpe$ denote
both a pseudo-Riemannian inner product on $\n$ making $\laz$ nondegenerate, and
the induced left-invariant metric tensor on $N$. 

The adjoint representation of $\n$ on itself, $\ad{} :\n \times \n \to \n$, is
given by $\ad{x} = \br{x}{\ }$. For every $z \in \laz$ define a skew-symmetric
linear transformation $j(z)$ on $\lav$ by
$$
j(z) = \adj{\bullet}z,
$$
where $\adj{}$ is the adjoint of $\ad{}$ with respect to $\inpe$. That is,
$$
\inp{\ad{x}y}{z} = \inp{y}{j(z)x}
$$
for all $x, y \in \lav$ and all $z \in \laz$.

These $j$-maps simultaneously encode the algebraic and geometric information
about the (pseudo-) Riemannian 2-step nilpotent Lie group $(N,\inpe)$. There is
a special family of groups, originally studied by Kaplan \cite{K2}, for which 
these maps act like scaled anti-involutions.

\begin{definition}
A\label{htype} 2-step nilpotent Lie group $N$ with left-invariant Riemannian
metric $\inpe$ is said to be of \emph{$H$-type} if and only if for each $z
\in \laz$,
$$
\inp{j(z)x}{j(z)y} = \norm{z}^2 \inp{x}{y}
$$
for all $x, y \in \lav$. Alternatively, for each $z \in \laz$,
$j(z)^2 = - \norm{z}^2 \id|_\lav$.
\end{definition}

\begin{definition}
A\label{phtype} pseudo-Riemannian 2-step nilpotent Lie group $N$ with
nondegenerate center is said to be of
\emph{pseudo-$H$-type} if and only if for each $z \in \laz$,
$$
\inp{j(z)x}{j(z)y} = \norm{z}^2 \inp{x}{y}
$$
for all $x, y \in \lav$. Alternatively, for each $z \in \laz$,
$j(z)^2 = - \norm{z}^2 \id|_\lav$.
\end{definition}

\section{Definition and Examples}

Definitions\label{de} \ref{htype} and \ref{phtype} can be generalized by replacing
the quadratic form determined by $\inpe_\laz$ with another quadratic form on $\laz$.
J. Lauret \cite{L99} had this idea in the Riemannian case, and gave the following 
definition.

\begin{definition}
A\label{mhriem} Riemannian 2-step nilpotent Lie group is said to be a 
\emph{modified $H$-type group} if for any nonzero $a \in \laz$, $j(a)^2 = \lambda(a)
\id|_\lav$ for some $\lambda(a) < 0$.
\end{definition}

\emph{A priori}, the $\lambda$ in this definition is just a function on $\laz$.
However, the properties of the $j$-maps actually imply that $\lambda$ is a 
nondegenerate, negative-definite quadratic form on $\laz$.

We generalize this definition by not only considering pseudo-Riemannian metrics on
$N$, but also allowing quadratic forms on $\laz$ of the
most general type. That is, we allow that the associated symmetric bilinear
form be indefinite and/or degenerate. 

\begin{definition}
Let\label{gh} $(N,\inpe)$ be a 2-step nilpotent Lie group with left-invariant metric
$\inpe = \inpe_\laz + \inpe_\lav$ making $\laz$ nondegenerate, and let $\vp$
be a quadratic form on $\laz$.
We shall say that $(N,\inpe,\vp)$ is of \emph{modified $H$-type}
if and only if for each $z \in \laz$, the map $j(z) \in \End(\lav)$ satisfies
\begin{equation}
\inp{j(z)x}{j(z)y}_\lav\label{ghj} = \vp(z)\inp{x}{y}_\lav
\end{equation}
for all $x, y \in \lav$. Alternatively, since the $j$-maps are skew-adjoint this
property is characterized by 
\begin{equation}
j(z)^2\label{ghjs} = -\vp(z)\,\id|_\lav
\end{equation}
for all $z \in \laz$.
\end{definition}

\begin{remark}
Even when the metric is Riemannian, Definition \ref{gh} is more general (although
not necessarily more interesting) than 
Definition \ref{mhriem}. This is illustrated below by Example \ref{teh0}.
\end{remark}

\begin{example}
If\label{gen} $\inpe$ is a left-invariant Riemannian metric on $N$ and $\vp =
\norme_\laz^2$,
then $(N,\inpe)$ is an $H$-type group in the sense of Kaplan \cite{K2}. If $\inpe$
is a left-invariant pseudo-Riemannian metric on $N$ and $\vp = \norme_\laz^2$, then
$(N,\inpe)$ is a pseudo-$H$-type group in the sense of Ciatti \cite{Ci}.
\end{example}

\begin{example}
Consider\label{h3sig} the Heisenberg algebra $\lah_3$ spanned by the vectors
$\{e_1,e_2,e_3\}$ with nontrivial bracket $\br{e_1}{e_2} = e_3$. Clearly the
center is $\laz = \span{e_3}$. Consider the Lorentzian inner product on $\lah_3$
given on this basis by 
$$
-\inp{e_1}{e_1} = \inp{e_2}{e_2} = \inp{e_3}{e_3} = 1.
$$
Now let $(H_3,\inpe)$ be the 3-dimensional Heisenberg group with the left-invariant
Lorentz metric induced by this inner product on $\lah_3$.
The $j$-maps on this group satisfy
$$
j(z)^2 = \norm{z}_\laz^2 \id|_\lav,
$$
so this group is not of pseudo-$H$-type. However, taking $\vp = -\norme_\laz^2$ one
sees that it is of modified $H$-type. Therefore the class of modified $H$-type metrics
is strictly larger than the class of pseudo-$H$-type metrics.
\end{example}

\begin{example}
In\label{Rgheis} the Riemannian case, the \emph{generalized Heisenberg groups}
satisfy 
$$
j(z)^2 = -4c\norm{z}_\laz^2 \id|_\lav
$$
for all $z \in \laz$, $c \in (0,\infty)$. See \cite{Prip, E1}, for example. These groups
are of modified $H$-type with
$\vp = 4c\norme_\laz^2$, but they are only of $H$-type when $c = \tfrac{1}{4}$. Thus
the class of modified $H$-type metrics is strictly larger than the class of $H$-type
metrics.

This example may be extended straightforwardly to pseudo-Riemannian groups
to obtain the pseudo-Riemannian generalized Heisenberg groups.
\end{example}

Each of the previous examples constructs modified $H$-type metrics on groups that
are already known to admit $H$-type or pseudo-$H$-type metrics. The next examples
show that the class of groups admitting modified $H$-type metrics is strictly larger
than those admitting (pseudo-) $H$-type metrics.

Consider the 4-dimensional group obtained by a trivial central extension of the
Heisenberg group, $N = H_3(\R) \times \R$. The Lie algebra $\n = \lah_3 \ds \R$
has basis $\{e_0,e_1,e_2,e_3\}$ with nontrivial bracket $\br{e_1}{e_2} = e_3$. Thus
$\laz = \span{e_0,e_3}$ and $\lav = \span{e_1,e_2}$. The following examples show
that there are modified $H$-type metrics on $N$ of each possible signature.

Moreover, this group does not admit an $H$-type or pseudo-$H$-type metric.
Indeed, it does not admit an $H$-type metric because it is singular, and it does
not admit a pseudo-$H$-type metric because the minimum dimension of a pseudo-%
$H$-type group with 2-dimensional center is six; \emph{viz.} \cite{Ci}.

\begin{example}
Consider\label{teh0} the Riemannian metric $g_0$ on $N = H_3(\R) \times \R$ making
the basis $\{e_0,e_1,e_2,e_3\}$ orthonormal. That is,
$$
\inp{e_i}{e_j} = \begin{cases}
1 & i = j \\
0 & i \neq j
\end{cases}.
$$
The $j$-maps on this group are given by
$$
j(e_0) = \begin{pmatrix}
0 & 0 \\
0 & 0
\end{pmatrix},\ \ \text{and}\ \ j(e_3) = \begin{pmatrix}
0 & -1 \\
1 & 0 
\end{pmatrix},
$$
so that their squares satisfy $j(e_0)^2 = 0$ and $j(e_3)^2 = -\id|_\lav$. The maps
$j(e_0 + e_3) = j(e_3)$ and $j(e_0 - e_3) = - j(e_3)$, so that the $j(e_0 + e_3)^2 
= j(e_0 - e_3)^2 = - \id|_\lav$. Let $\vp_0$ be the quadratic form on $\laz$
satisfying
$$
\vp_0(e_0) = 0,\ \ \vp_0(e_3) = \vp_0(e_0 + e_3) = \vp_0(e_0 - e_3) = 1.
$$
The induced symmetric bilinear form on $\laz$ satisfies 
$$
\scp{e_0}{e_3}_{\vp_0} = \tfrac{1}{4}\left(\vp_0(e_0 + e_3) - \vp_0(e_0 - e_3)\right) 
= \tfrac{1}{4} \left( 1 - 1 \right) = 0,
$$
so that its matrix representation is
$$
\Phi_0 = \begin{pmatrix}
0 & 0 \\
0 & 1
\end{pmatrix}.
$$
This bilinear form is exactly what one should expect since $N$ is a trivial extension
of $H_3(\R)$.
We see that $j(z)^2 = -\vp_0(z)\,\id|_\lav$ for every $z \in \laz$. Therefore 
$(N,g_0,\vp_0)$ is a modified $H$-type group with degenerate quadratic form $\vp_0$.
\end{example}

\begin{example}
Now\label{teh1} consider the left-invariant Lorentzian metric $g_1$ on $N$ given by
$$
\inp{e_0}{e_3} = \inp{e_1}{e_1} = \inp{e_2}{e_2} = 1,
$$
with all other combinations zero. The $j$-maps for this group are
$$
j(e_0) = \begin{pmatrix}
0 & -1 \\
1 & 0
\end{pmatrix}, \text{\ \ and\ \ } j(e_3) = \begin{pmatrix}
0 & 0 \\
0 & 0
\end{pmatrix},
$$
so that $j(e_0)^2 = -\id|_\lav$ and $j(e_3)^2 = 0$. Taking a pseudo-orthonormal
basis of $\laz$,
$$
u_1 = \tfrac{1}{\sqrt{2}} (e_0 + e_3), \ \ u_2 = \tfrac{1}{\sqrt{2}} (e_0 - e_3),
$$
we obtain $j(u_1) = j(u_2) = \tfrac{1}{\sqrt{2}} j(e_0)$, so that $j(u_1)^2
= j(u_2)^2 = -\tfrac{1}{2}\,\id|_\lav$. Moreover, $u_1 + u_2 = \sqrt{2}e_0$ and
$u_1 - u_2 = -\sqrt{2}e_3$, so that $j(u_1 + u_2)^2 = -2\, \id|_\lav$ and 
$j(u_1 - u_2)^2 = 0$. Let $\vp_1$ be the quadratic form on $\laz$ satisfying 
$$
\vp_1(u_1) = \vp_1 (u_2) =  \tfrac{1}{2},\ \ \vp_1(u_1 + u_2) = 2,\ \ \text{and}\ \ 
\vp(u_1 - u_2) = 0.
$$
The induced symmetric bilinear form on $\laz$ satisfies
$$
\scp{u_1}{u_2}_{\vp_1} = \tfrac{1}{4}\left(\vp_1(u_1 + u_2) - \vp_1(u_1 - u_2)\right)
= \tfrac{1}{4}(2 - 0) = \tfrac{1}{2}.
$$
Therefore its matrix representation is
$$
\Phi_1 = \begin{pmatrix}
\tfrac{1}{2} & \tfrac{1}{2} \\[0.25 ex]
\tfrac{1}{2} & \tfrac{1}{2}
\end{pmatrix},
$$
and $\vp_1$ is degenerate. However, $j(z)^2 = -\vp_1(z)\, \id|_\lav$ for
all $z \in \laz$, so $(N,g_1,\vp_1)$ is indeed a modified $H$-type group.
\end{example}

\begin{example}
Next\label{teh2} consider the left-invariant neutral metric $g_2$ on $N$ given
on the Lie algebra $\n$ by
$$
\inp{e_0}{e_3} = \inp{e_1}{e_2} = 1,
$$
with all other combinations zero. Before computing the $j$-maps, change the
basis of $\n$ to
\begin{eqnarray*}
u_1 & = & \tfrac{1}{\sqrt{2}} \left(e_0 + e_3\right), \\
u_2 & = & \tfrac{1}{\sqrt{2}} \left(e_0 - e_3\right), \\
v_1 & = & \tfrac{1}{\sqrt{2}} \left(e_1 + e_2\right), \\
v_2 & = & \tfrac{1}{\sqrt{2}} \left(e_1 - e_2\right).
\end{eqnarray*}
The basis $\{u_1,u_2,v_1,v_2\}$ is pseudo-orthonormal with
\begin{eqnarray*}
\inp{u_1}{u_1} & = & - \inp{u_2}{u_2} = 1, \text{\ and} \\
\inp{v_1}{v_1} & = & - \inp{v_2}{v_2} = 1.
\end{eqnarray*}
The Lie bracket is now given by $\br{v_1}{v_2} = \tfrac{1}{\sqrt{2}}\left(u_1 - u_2
\right)$,  $\laz = \span{u_1,u_2}$, and $\lav = \span{v_1,v_2}$. The $j$-maps
are given with respect to these bases by
$$
j(u_1) = j(u_2) = \frac{1}{\sqrt{2}} \begin{pmatrix}
0 & -1 \\
-1 & 0
\end{pmatrix},
$$
so that
$$
j(u_1 + u_2) = \sqrt{2} \begin{pmatrix}
0 & -1 \\
-1 & 0
\end{pmatrix}.
$$
We see that $j(u_1)^2 = j(u_2)^2 = \tfrac{1}{2}\, \id|_\lav$, and $j(u_1 + u_2)^2 
= 2\,\id|_\lav$. Let $\vp_2$ be the quadratic form on
$\laz$ satisfying
$$
\vp_2(u_1) = \vp_2(u_2) = -\tfrac{1}{2}, \ \ \text{and}\ \ \vp_2(u_1 + u_2) = -2.
$$
Then the associated bilinear form $\scp{\ }{\,}_{\vp_2}$ satisfies 
$$
\scp{u_1}{u_2}_{\vp_2} = \tfrac{1}{2} \left( \vp_2(u_1 + u_2) - \vp_2(u_1) - 
\vp_2(u_2)\right) = \tfrac{1}{2}\left( -2 + \tfrac{1}{2} + \tfrac{1}{2}\right) = 
-\tfrac{1}{2},
$$
and its matrix representation is
$$
\Phi_2 = \begin{pmatrix}
-\tfrac{1}{2} & -\tfrac{1}{2} \\[0.25 ex]
-\tfrac{1}{2} & -\tfrac{1}{2}
\end{pmatrix}.
$$
We see that $j(z)^2 = -\vp_2(z)\,\id|_\lav$ for all $z \in \laz$, so that
$(N,g_2,\vp_2)$ is a modified $H$-type group.
\end{example}

\begin{example}
Consider\label{teh3} the left-invariant metric $g_3$ given on $\n$ by
$$
\inp{e_0}{e_0} = \inp{e_3}{e_3} = \inp{e_1}{e_2} = 1.
$$ 
The $j$-maps with respect to the basis $\{e_1,e_2\}$ are
$$
j(e_0) = \begin{pmatrix}
0 & 0 \\
0 & 0
\end{pmatrix}\ \ \text{and}\ \ j(e_3) = \begin{pmatrix}
1 & 0 \\
0 & -1
\end{pmatrix},
$$
so that $j(e_0)^2 = 0$ and $j(e_3)^2 = \id|_\lav$. Notice that $j(e_0 + e_3) = 
j(e_3)$, so that $j(e_0 + e_3)^2 = \id|_\lav$ as well. Let $\vp_3$ be the quadratic
form on $\laz$ satisfying
$$
\vp_3(e_0) = 0,\ \ \vp_3(e_3) = \vp_3(e_0 + e_3) = -1.
$$
The bilinear form $\scpe_{\vp_3}$ satisfies
$$
\scp{e_0}{e_3}_{\vp_3} = \tfrac{1}{2}\left(\vp_3(e_0 + e_3) - \vp_3(e_0) - 
\vp_3(e_3)\right) = \tfrac{1}{2}(-1 - 0 + 1) = 0,
$$
so its matrix representation is 
$$
\Phi_3 = \begin{pmatrix}
0 & 0 \\
0 & -1
\end{pmatrix}.
$$
Therefore $(N,g_3,\vp_3)$ is a modified $H$-type group.
\end{example}

\section{Connection and Curvatures}

In this section we give all relevant curvature formulas for modified $H$-type groups.
Let $(N,\inpe,\vp)$ be a modified $H$-type Lie group. Let $\scppe$ be the symmetric
bilinear form
on $\laz$ obtained by polarizing the quadratic form $\vp$. Then for all $e,e' \in 
\lav$ and all $z, z' \in \laz$, 
\begin{eqnarray}
\inp{j(z)e}{j(z)e'}_\lav\label{jmpol1} & = & \vp(z) \inp{e}{e'}_\lav, \\
\inp{j(z)e}{j(z')e}_\lav\label{jmpol2} & = & \scpp{z}{z'}\inp{e}{e}_\lav,
\text{\ and} \\
j(z)j(z') + j(z')j(z)\label{jmpol3} & = & -2\scpp{z}{z'}\id|_\lav.
\end{eqnarray}

These identities as well as the following formulas for the Levi-Civita connection
and curvatures may be deduced from \cite{CP4}. We prove only the curvature formulas
below that are unique to modified $H$-type groups.

\begin{theorem}
Let\label{lcc} $(N,\inpe)$ be a 2-step nilpotent Lie group with left-invariant
pseudo-Riemannian metric making the center nondegenerate. For all $e, e' \in \lav$
and $z, z' \in \laz$, the Levi-Civita connection on $N$ is given by
\begin{eqnarray}
\del{z}z'\label{lcc1} & = & 0, \\
\del{z}e\label{lcc2} = \del{e}z & = & -\tfrac{1}{2} j(z)e, \\
\del{e}e'\label{lcc3} & = & \tfrac{1}{2} \br{e}{e'}.
\end{eqnarray}
\end{theorem}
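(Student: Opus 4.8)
The plan is to compute directly from the Koszul formula, exploiting that left-invariant vector fields have constant inner products together with the special bracket relations of a 2-step nilpotent algebra. Since the Levi-Civita connection is $C^\infty$-linear in its first slot and a derivation in its second, it is determined by its values on a basis of left-invariant fields, so it suffices to establish (\ref{lcc1})--(\ref{lcc3}) for $z,z' \in \laz$ and $e,e' \in \lav$. For left-invariant $X,Y,W$ the functions $\inp{Y}{W}$, $\inp{W}{X}$, $\inp{X}{Y}$ are constant, so the three differentiation terms in the Koszul formula drop out and we are left with the purely algebraic identity
$$
2\inp{\del{X}Y}{W} = \inp{\br{X}{Y}}{W} - \inp{\br{Y}{W}}{X} + \inp{\br{W}{X}}{Y},
$$
valid for every left-invariant $W$. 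I would test this against an arbitrary $W = w_\laz + w_\lav \in \laz \ds \lav$ and read off $\del{X}Y$ using nondegeneracy of $\inpe$. Throughout I will use three structural facts: $\laz$ is central, so $\br{z}{\,\cdot\,} = 0$; the algebra is 2-step, so $\br{\lav}{\lav} \subseteq \laz$; and, by the standing convention forced by nondegeneracy of the center, $\lav = \laz^\perp$, so $\laz \perp \lav$.

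For (\ref{lcc1}) take $X = z$, $Y = z'$. Each of the three surviving terms contains a bracket with a central element, namely $\br{z}{z'}$, $\br{z'}{W}$, and $\br{W}{z}$, so all three vanish and $\del{z}z' = 0$. For (\ref{lcc2}) take $X = z$, $Y = e$: the terms $\inp{\br{z}{e}}{W}$ and $\inp{\br{W}{z}}{e}$ vanish by centrality, leaving $2\inp{\del{z}e}{W} = -\inp{\br{e}{W}}{z}$. Only the $\lav$-part of $W$ survives the bracket, and since $\br{e}{w_\lav} \in \laz$ the defining relation $\inp{\br{e}{w_\lav}}{z} = \inp{w_\lav}{j(z)e}$ together with $j(z)e \in \lav = \laz^\perp$ turns this into $2\inp{\del{z}e}{W} = -\inp{j(z)e}{W}$; nondegeneracy then gives $\del{z}e = -\half j(z)e$. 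The companion identity $\del{e}z = \del{z}e$ follows at once from torsion-freeness, since $\del{e}z - \del{z}e = \br{e}{z} = 0$. For (\ref{lcc3}) take $X = e$, $Y = e'$: now $\br{e}{e'} \in \laz$, while the two cross terms $\inp{\br{e'}{W}}{e}$ and $\inp{\br{W}{e}}{e'}$ vanish because each bracket lands in $\laz$ and is paired against an element of $\lav$. What remains is $2\inp{\del{e}e'}{W} = \inp{\br{e}{e'}}{W}$, whence $\del{e}e' = \half\br{e}{e'}$.

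There is no deep obstacle here; the content is entirely the careful bookkeeping of the orthogonal splitting $\n = \laz \ds \lav$ and the correct invocation of the definition of $j(z)$. The step most prone to error is (\ref{lcc2}), where one must track the sign and the argument order in the bracket, apply the defining relation for $j(z)$, and discard the $\laz$-component of $W$ at the right moment; the other two cases are immediate consequences of centrality and the orthogonality $\laz \perp \lav$. As a consistency check one may verify torsion-freeness and metric compatibility of the resulting formulas, both of which are built into the Koszul derivation.
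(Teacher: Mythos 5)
Your proof is correct, and since the paper offers no proof of Theorem \ref{lcc} itself---it defers these connection formulas to the cited reference of Cordero and Parker---your Koszul-formula computation is exactly the standard derivation that reference supplies. The bookkeeping is right throughout: the surviving term in case (\ref{lcc2}) matches the paper's convention $\inp{\ad{x}y}{z} = \inp{y}{j(z)x}$, and the orthogonality $\lav = \laz^\perp$ you invoke is the standing convention built into the splitting $\inpe = \inpe_\laz + \inpe_\lav$.
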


\begin{theorem}
Let\label{riem} $(N,\inpe)$ be a 2-step nilpotent Lie group with left-invariant
pseudo-Riemannian metric making the center nondegenerate. For all $e, e', e'' \in 
\lav$ and $z, z', z'' \in \laz$, the Riemann curvature tensor on $N$ is given by
\begin{eqnarray}
R(z,z')z''\label{riem1} & = & 0, \\
R(z,z')e\label{riem2} & = & \tfrac{1}{4}\br{z}{z'}e, \\
R(z,e)z'\label{riem3} & = & \tfrac{1}{4}j(z)j(z')e, \\
R(z,e)e' \label{riem4} & = & \tfrac{1}{4} \br{e}{j(z)e'}, \\
R(e,e')z\label{riem5} & = & -\tfrac{1}{4}\left(\br{e}{j(z)e'} + \br{j(z)e}{e'}
\right), \\
R(e,e')e''\label{riem6} & = & \tfrac{1}{4}\left( j\left(\br{e}{e''}\right)e' -
j\left(\br{e'}{e''}\right)e \right)+ \tfrac{1}{2}j\left(\br{e}{e'}\right)e''.
\end{eqnarray}
\end{theorem}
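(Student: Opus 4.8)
The plan is to compute each of the six components directly from the definition of the Riemann curvature tensor,
$$
R(X,Y)W = \del{X}\del{Y}W - \del{Y}\del{X}W - \del{\br{X}{Y}}W,
$$
evaluated on left-invariant fields $X,Y,W \in \n$, by substituting the Levi-Civita connection of Theorem \ref{lcc} and using only two structural facts about a 2-step nilpotent algebra with nondegenerate center: that $\laz$ is central, so $\br{z}{z'} = \br{z}{e} = 0$, and that $\br{e}{e'} \in \laz$. I emphasize that the modified $H$-type hypothesis \eqref{ghjs} is never invoked; formulas \eqref{riem1}--\eqref{riem6} hold for \emph{every} such group, and the special structure of $\vp$ enters only afterward, when these expressions are contracted into Ricci and scalar curvatures.

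The bookkeeping is governed by three transport rules read off from \eqref{lcc1}--\eqref{lcc3}: $\del{z}$ annihilates $\laz$ and carries $\lav$ into $\lav$ via $-\tfrac{1}{2}j(z)$; the operator $\del{e}$ carries $\laz$ into $\lav$ via $w \mapsto -\tfrac{1}{2}j(w)e$; and $\del{e}$ carries $\lav$ into $\laz$ via $e' \mapsto \tfrac{1}{2}\br{e}{e'}$. With these in hand each case becomes a short substitution. The fully central case \eqref{riem1} is immediate, since every connection term and the bracket $\br{z}{z'}$ vanish. For the mixed cases one must track which summand each intermediate vector occupies: in \eqref{riem2} both nested derivatives survive in $\lav$ while the bracket term drops because $\br{z}{z'}=0$, leaving the operator commutator $\tfrac{1}{4}\bigl(j(z)j(z') - j(z')j(z)\bigr)e$ of \eqref{riem2}; in \eqref{riem3} and \eqref{riem4} exactly one of the two nested derivatives drops out, because $\del{z}z'=0$ in the former and because $\del{z}$ annihilates the central value $\br{e}{e'}$ in the latter; and in \eqref{riem5} both nested derivatives survive, while instead the bracket term $\del{\br{e}{e'}}z$ vanishes, since $\br{e}{e'}\in\laz$ and $\del{(\,\cdot\,)}z$ kills central fields. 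An antisymmetrization $\br{e'}{j(z)e} = -\br{j(z)e}{e'}$ then puts \eqref{riem5} into the stated form.

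The most involved computation is \eqref{riem6}, where all three terms of $R$ contribute. Applying $\del{e'}$ then $\del{e}$ to $e''$ produces $-\tfrac{1}{4}j(\br{e'}{e''})e$ after noting $\br{e'}{e''}\in\laz$; the symmetric term yields $+\tfrac{1}{4}j(\br{e}{e''})e'$; and since $\br{e}{e'}\in\laz$, the bracket term contributes $-\del{\br{e}{e'}}e'' = \tfrac{1}{2}j(\br{e}{e'})e''$. Collecting these reproduces \eqref{riem6}. I expect the principal obstacle to be organizational rather than conceptual: one must be disciplined about the $\laz$-versus-$\lav$ membership of each partial result so that the correct transport rule is applied at every stage, and careful with the antisymmetrizations needed to match the stated forms. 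The polarized identities \eqref{jmpol1}--\eqref{jmpol3} are not required to derive these six expressions, but they furnish a convenient consistency check, for instance confirming that each right-hand side is genuinely skew in its first two slots.
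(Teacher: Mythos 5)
Your computation is correct, and it is essentially the only reasonable derivation: substitute the connection formulas of Theorem \ref{lcc} into $R(X,Y)W = \del{X}\del{Y}W - \del{Y}\del{X}W - \del{\br{X}{Y}}W$ for left-invariant fields, tracking whether each intermediate vector lies in $\laz$ or $\lav$. The paper itself gives no proof of this theorem --- it defers entirely to Cordero--Parker \cite{CP4}, proving only the statements specific to modified $H$-type groups --- so your write-up supplies the computation that the citation stands in for, and all six cases check out against the stated connection and the sectional-curvature formula used later (e.g.\ $\inp{R(z,e)e}{z} = \tfrac{1}{4}\inp{j(z)e}{j(z)e}$ recovers \eqref{sec2}). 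One point deserves explicit mention rather than the passing treatment you give it: in \eqref{riem2} the symbol $\br{z}{z'}$ cannot be the Lie bracket of $z$ and $z'$, which vanishes identically since both are central; your calculation correctly yields $\tfrac{1}{4}\bigl(j(z)j(z')-j(z')j(z)\bigr)e$, so the bracket there must be read as the commutator of the operators $j(z)$ and $j(z')$, and you should say so, since otherwise a reader may think your answer disagrees with the stated formula. Your closing remark that \eqref{jmpol1}--\eqref{jmpol3} give a skew-symmetry consistency check is harmless but not needed; antisymmetry of $R$ in its first two arguments is automatic from the definition and is visible in each right-hand side without any modified $H$-type hypothesis, consistent with your (correct) observation that the theorem holds for every 2-step nilpotent group with nondegenerate center.
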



\pagebreak

\begin{theorem}
Let $(N,\inpe,\vp)$\label{sec} be a modified $H$-type group, and suppose that
$e, e' \in \lav$, $z, z' \in \laz$ are pseudo-orthonormal. The sectional
curvature is given by
\begin{eqnarray}
K(z,z')\label{sec1} & = & 0, \\
K(z,e)\label{sec2} & = & \tfrac{1}{4}\ve_{z}\,\vp(z), \\
K(e,e')\label{sec3} & = & -\tfrac{3}{4} \ve_{e}\, \ve_{e'}\,
\norm{\br{e}{e'}}_\laz^2,
\end{eqnarray}
where $\ve_\bullet = \norm{\bullet}^2 = \pm 1$.
\end{theorem}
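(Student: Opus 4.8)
The plan is to evaluate each sectional curvature directly from the Riemann tensor of Theorem~\ref{riem}, using the pseudo-Riemannian definition
$$
K(X,Y) = \frac{\inp{R(X,Y)Y}{X}}{\inp{X}{X}\inp{Y}{Y} - \inp{X}{Y}^2}
$$
compatible with the sign convention implicit in Theorem~\ref{riem}. Since the spanning vectors are pseudo-orthonormal the denominator reduces to $\ve_X\ve_Y$, and because each $\ve_\bullet = \pm 1$ we may replace $1/\ve_\bullet$ by $\ve_\bullet$. Thus in every case it suffices to compute the single scalar $\inp{R(X,Y)Y}{X}$ and multiply by $\ve_X\ve_Y$.

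The first two formulas follow quickly. For \eqref{sec1}, formula \eqref{riem1} gives $R(z,z')z'' = 0$ for all $z'' \in \laz$, whence $\inp{R(z,z')z'}{z} = 0$. For \eqref{sec2}, I would put $e' = e$ in \eqref{riem4} to get $R(z,e)e = \tfrac{1}{4}\br{e}{j(z)e}$, pair with $z$, and apply the defining identity $\inp{\br{x}{y}}{z} = \inp{y}{j(z)x}$ (with $x = e$, $y = j(z)e$) to rewrite $\inp{\br{e}{j(z)e}}{z}$ as $\inp{j(z)e}{j(z)e}$. The modified $H$-type relation \eqref{jmpol1} then turns this into $\vp(z)\,\ve_e$, and multiplying by $\ve_z\ve_e$ yields \eqref{sec2}. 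This is the only step where $\vp$ enters, so it is precisely what distinguishes the modified $H$-type case.

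The formula \eqref{sec3} is the substantive computation. Setting $e'' = e'$ in \eqref{riem6} and using $\br{e'}{e'} = 0$ to kill the middle term leaves $R(e,e')e' = \bigl(\tfrac{1}{4} + \tfrac{1}{2}\bigr)j\!\left(\br{e}{e'}\right)e' = \tfrac{3}{4}j\!\left(\br{e}{e'}\right)e'$; assembling the two separate contributions of \eqref{riem6} into the single coefficient $\tfrac{3}{4}$ is the one genuinely delicate point, the rest being bookkeeping. Writing $w = \br{e}{e'} \in \laz$ and pairing with $e$, I would use symmetry of the metric and the defining identity (now with $x = e'$, $y = e$) together with the antisymmetry $\br{e'}{e} = -\br{e}{e'}$ to identify $\inp{j(w)e'}{e}$ with $-\inp{w}{w} = -\norm{\br{e}{e'}}_\laz^2$. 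Hence $\inp{R(e,e')e'}{e} = -\tfrac{3}{4}\norm{\br{e}{e'}}_\laz^2$, and multiplying by $\ve_e\ve_{e'}$ gives \eqref{sec3}. The only real hazard throughout is keeping the signs coherent across the skew-adjointness and duality steps; once these are fixed, all three formulas drop out.
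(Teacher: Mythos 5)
Your proof is correct, and for the only formula the paper actually proves --- \eqref{sec2} --- your argument is essentially the paper's: reduce $K(z,e)$ to $\tfrac{1}{4}\ve_z\ve_e\inp{j(z)e}{j(z)e}_\lav$ and apply the modified $H$-type identity \eqref{jmpol1}. The paper simply cites \cite{CP4} for \eqref{sec1} and \eqref{sec3}; you instead derive all three from Theorem \ref{riem} (correctly, including the $\tfrac{1}{4}+\tfrac{1}{2}=\tfrac{3}{4}$ assembly and the sign from $\br{e'}{e}=-\br{e}{e'}$), which makes your version more self-contained but not substantively different.
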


\begin{proof}
Only equation \eqref{sec2} differs from the usual ones in \cite{CP4}. Let $z \in
 \laz$, $e \in \lav$, such that $\ve_z = \norm{z}_\laz^2 = \pm 1$ and $\ve_e =
\norm{e}_\lav^2 = \pm 1$. Then by \eqref{jmpol1},
\begin{eqnarray*}
K(z,e) 	& = & \tfrac{1}{4} \ve_z\, \ve_e\, \inp{j(z)e}{j(z)e}_\lav \\
		& = & \tfrac{1}{4} \ve_z\, \ve_e\, \vp(z)\, \norm{e}_\lav^2 \\
		& = & \tfrac{1}{4} \ve_z\, \vp(z).
\end{eqnarray*}
\end{proof}

Let $\{z_1,\hdots,z_p\}$ be a pseudo-orthonormal basis for the center $(\laz,\inpe_\laz)$,
and let $\{e_1,\hdots,e_m\}$ be a pseudo-orthonormal basis for $(\lav,\inpe_\lav)$. Define
\begin{equation}
\xi\label{xi} := \sum_{k = 1}^p \norm{z_k}_\laz^2 \, \vp(z_k).
\end{equation}
This number does not depend on the choice of basis of $\laz$ as it is the trace
of the operator $\escr{K} :z \mapsto \norm{z}_\laz^2 \vp(z)$.

\begin{theorem}
Let\label{ric} $(N,\inpe,\vp)$ be a modified $H$-type group, and let $e, e' \in 
\lav$, $z, z' \in \laz$. The Ricci curvature is given by
\begin{eqnarray}
\ric{e}{z}\label{ric1} & = & 0, \\
\ric{e}{e'}\label{ric2} & = & -\tfrac{\xi}{2} \inp{e}{e'}_\lav, \\
\ric{z}{z'}\label{ric3} & = & \tfrac{m}{4} \scpp{z}{z'}.
\end{eqnarray}
\end{theorem}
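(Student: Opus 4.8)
The plan is to compute each component of the Ricci tensor as the trace of the appropriate curvature operator over a fixed pseudo-orthonormal basis, and then to feed in the curvature formulas of Theorem~\ref{riem} together with the polarized $j$-map identities \eqref{jmpol1}--\eqref{jmpol3}. Concretely, I complete $\{z_1,\dots,z_p\}$ and $\{e_1,\dots,e_m\}$ to a pseudo-orthonormal basis of $\n$ and use the trace expression $\ric{x}{y} = \sum_k \ve_{z_k}\inp{R(z_k,x)y}{z_k} + \sum_i \ve_{e_i}\inp{R(e_i,x)y}{e_i}$, so that each Ricci component splits into a sum over the center and a sum over $\lav$. Throughout I will rely on the defining relation $\inp{\br{x}{y}}{z} = \inp{y}{j(z)x}$, the skew-adjointness of each $j(z)$, and the completeness relation $a = \sum_i \ve_{e_i}\inp{a}{e_i}\,e_i$ valid for $a \in \lav$.

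Two of the three formulas will follow quickly. For \eqref{ric1}, the term $R(z_k,e)z$ lies in $\lav$ while $R(e_i,e)z$ lies in $\laz$ (by \eqref{riem3} and \eqref{riem5}), so pairing against $z_k \in \laz$ and $e_i \in \lav$ respectively gives zero by the orthogonality of $\laz$ and $\lav$; hence $\ric{e}{z} = 0$. For \eqref{ric3}, the center sum vanishes by \eqref{riem1}, and \eqref{riem3} together with the antisymmetry $R(e_i,z) = -R(z,e_i)$ reduces the $\lav$ sum to $-\tfrac14\tr{j(z)j(z')}$. Taking the trace of \eqref{jmpol3} and using that the trace is cyclic gives $\tr{j(z)j(z')} = -m\scpp{z}{z'}$, whence $\ric{z}{z'} = \tfrac{m}{4}\scpp{z}{z'}$.

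The substance of the proof is \eqref{ric2}. The center sum is handled directly: by \eqref{riem4} and the definition of $j$ one has $\inp{R(z_k,e)e'}{z_k} = \tfrac14\inp{j(z_k)e'}{j(z_k)e}$, which by \eqref{jmpol1} equals $\tfrac14\vp(z_k)\inp{e}{e'}$, and summing against $\ve_{z_k}$ produces $\tfrac14\xi\inp{e}{e'}$ by the definition \eqref{xi} of $\xi$. For the $\lav$ sum I expand $R(e_i,e)e'$ using \eqref{riem6}: the middle term pairs a skew-adjoint $j$-map against the same vector $e_i$ and so vanishes, while the other two terms, via $\inp{j(z)x}{y} = \inp{\br{x}{y}}{z}$ and the antisymmetry of the bracket, both collapse onto the single sum $S := \sum_i \ve_{e_i}\inp{\br{e}{e_i}}{\br{e_i}{e'}}$, with coefficients $\tfrac14$ and $\tfrac12$. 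The key step is to evaluate $S$: expanding both brackets in the center basis, rewriting each inner product with $z_k$ through the defining relation of $j$, and then applying skew-adjointness and the completeness relation to collapse the sum over $i$ yields $S = -\sum_k \ve_{z_k}\inp{j(z_k)e}{j(z_k)e'} = -\xi\inp{e}{e'}$, again by \eqref{jmpol1} and \eqref{xi}. Combining, $\ric{e}{e'} = \tfrac14\xi\inp{e}{e'} + \tfrac34 S = \tfrac14\xi\inp{e}{e'} - \tfrac34\xi\inp{e}{e'} = -\tfrac{\xi}{2}\inp{e}{e'}$.

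The main obstacle is precisely the evaluation of the $\lav$ sum in \eqref{ric2}: one must manage a double sum over both the center and $\lav$ bases while carefully tracking the indefinite sign factors $\ve_{z_k}$ and $\ve_{e_i}$, and the crucial maneuver is to recognize the inner sum over $i$ as a resolution of the identity on $(\lav,\inpe_\lav)$ so that it reconstitutes $\inp{j(z_k)e}{j(z_k)e'}$. Once $S$ is identified with $-\xi\inp{e}{e'}$, the remaining bookkeeping—collecting the $\tfrac14$ contributed by the center and the $\tfrac34$ contributed by $\lav$—is routine.
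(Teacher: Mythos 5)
Your proof is correct, but it takes a genuinely different route from the paper's. The paper simply quotes the general Ricci formulas for pseudo-Riemannian 2-step nilpotent groups with nondegenerate center from \cite{CP4} --- namely $\ric{e}{e'} = -\tfrac12\sum_k \ve_{z_k}\inp{j(z_k)e}{j(z_k)e'}_\lav$ and $\ric{z}{z'} = \tfrac14\sum_i \ve_{e_i}\inp{j(z)e_i}{j(z')e_i}_\lav$, with \eqref{ric1} asserted as a known general fact --- and then substitutes the modified $H$-type identities \eqref{jmpol1} and \eqref{jmpol2}; the whole proof is two short chains of equalities. You instead rederive those general formulas from scratch by tracing the curvature operators of Theorem \ref{riem}, which forces you to do the extra work of evaluating the $\lav$-sum in \eqref{ric2}: recognizing that the first and third terms of \eqref{riem6} both collapse to $\sum_i \ve_{e_i}\inp{\br{e}{e_i}}{\br{e_i}{e'}}_\laz$ with total coefficient $\tfrac34$, and that this sum equals $-\sum_k\ve_{z_k}\inp{j(z_k)e}{j(z_k)e'}_\lav$ via the resolution of the identity on $(\lav,\inpe_\lav)$. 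I checked this bookkeeping (including the sign coming from $\inp{\br{e_i}{e'}}{z_k} = -\inp{j(z_k)e'}{e_i}$ and the $\tfrac14 - \tfrac34 = -\tfrac12$ collapse) and it is consistent with the cited formulas and with the curvature sign convention implicit in Theorem \ref{riem}; likewise your trace argument for \eqref{ric3} via $\tr{j(z)j(z')} = -m\scpp{z}{z'}$ from \eqref{jmpol3} is sound. What your approach buys is self-containedness --- everything is derived from Theorem \ref{riem} and the defining relation of the $j$-maps, and \eqref{ric1} gets an actual argument rather than a citation --- at the cost of a longer computation; the paper's approach buys brevity by outsourcing the trace computations to \cite{CP4}.
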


\begin{proof}
Equation \eqref{ric1} is true for all pseudo-Riemannian 2-step nilpotent groups with
nondegenerate centers.
Using the pseudo-orthonormal bases of \eqref{xi} and letting $z, z' \in \laz$,
$e, e' \in \lav$, we compute
\begin{eqnarray*}
\ric{e}{e'}	& = & -\frac{1}{2}\sum_{k = 1}^p \norm{z_k}_\laz^2 \inp{j(z_k)e}
{j(z_k)e'}_\lav \\
			& = & -\frac{1}{2}\sum_{k = 1}^p \norm{z_k}_\laz^2 \vp(z_k) \inp{e}
{e'}_\lav \\
			& = & -\frac{1}{2} \inp{e}{e'}_\lav \sum_{k = 1}^p \norm{z_k}_\laz^2
			\vp(z_k) \\
			& = & -\frac{\xi}{2} \inp{e}{e'}_\lav,
\end{eqnarray*}
and
\begin{eqnarray*}
\ric{z}{z'} 	& = & \frac{1}{4} \sum_{i = 1}^m \norm{e_i}_\lav^2 
			\inp{j(z)e_i}{j(z')e_i}_\lav \\
			& = & \frac{1}{4} \sum_{i = 1}^m \norm{e_i}_\lav^2 \scpp{z}{z'}
			\inp{e_i}{e_i}_\lav \\
			& = & \frac{m}{4} \scpp{z}{z'}.
\end{eqnarray*}
\end{proof}

The Ricci operator $\rc: \n \to \n$ is defined by
$$
\inp{\rc\, x	}{y} = \ric{x}{y}
$$
for all $x, y \in \n$. By \eqref{ric1}, the Ricci operator preserves the
splitting $\n = \laz \ds \lav$. By \eqref{ric3}, the Ricci operator restricted to
$\laz$ is determined by
\begin{equation}
\inp{\rc\, z}{z'}_\laz\label{rop*} = \tfrac{m}{4} \scpp{z}{z'}.
\end{equation}
Denote by $\id|^\dagger_\vp$, the adjoint of the identity map on $\laz$ with
respect to the metrics $\scppe$ and $\inpe_\laz$.
This adjoint is defined by the equation
\begin{equation}
\inp{\id|^\dagger_\vp\, z}{z'}_\laz\label{idad} = \scpp{z}{\id\, z'}.
\end{equation}
Comparing \eqref{rop*} and \eqref{idad}, $\rc|_\laz$ is proportional to
$\id|_\vp^\dagger$. 
We deduce the following theorem regarding the Ricci operator of modified $H$-type
groups.

\begin{theorem}
Let\label{rop} $(N,\inpe,\vp)$ be a modified $H$-type group. The Ricci operator
preserves the splitting $\n = \laz \ds \lav$, and is given on each factor by
\begin{eqnarray}
\rc|_\lav\label{rop1} & = & -\tfrac{\xi}{2}\, \id|_\lav, \\
\rc|_\laz\label{rop2} & = & \tfrac{m}{4}\, \id|^\dagger_\vp.
\end{eqnarray}
\end{theorem}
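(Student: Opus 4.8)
The plan is to derive the entire statement from Theorem \ref{ric} by converting its three bilinear-form identities into statements about the operator $\rc$, the only technical ingredient being that both $\inpe_\laz$ and $\inpe_\lav$ are nondegenerate (the latter because $\laz$ is nondegenerate and $\lav = \laz^\perp$). Throughout I would invoke the elementary principle that on a nondegenerate inner product space a vector is completely determined by its pairings against all vectors of that space, so that an equality of the form $\inp{a}{w} = \inp{b}{w}$ holding for every $w$ forces $a = b$.

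First I would verify that $\rc$ preserves the splitting $\n = \laz \ds \lav$. Since the Ricci tensor is symmetric, the operator defined by $\inp{\rc\, x}{y} = \ric{x}{y}$ is self-adjoint. For $e \in \lav$ and any $z \in \laz$, equation \eqref{ric1} gives $\inp{\rc\, e}{z} = \ric{e}{z} = 0$; as this holds for every $z$ and $\laz$ is nondegenerate, the $\laz$-component of $\rc\, e$ must vanish, so $\rc\, e \in \lav$. Using symmetry of the Ricci tensor to get $\ric{z}{e} = 0$ and repeating the argument with the roles of $\laz$ and $\lav$ interchanged (now invoking nondegeneracy of $\lav$) shows $\rc\, z \in \laz$. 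Hence $\rc|_\lav$ and $\rc|_\laz$ are well-defined endomorphisms of $\lav$ and $\laz$.

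For the $\lav$-block I would take $e, e' \in \lav$ and read off from \eqref{ric2} that $\inp{\rc\, e}{e'}_\lav = \ric{e}{e'} = -\tfrac{\xi}{2}\inp{e}{e'}_\lav$; since $e'$ ranges over the nondegenerate space $\lav$, this forces $\rc\, e = -\tfrac{\xi}{2}\,e$, which is \eqref{rop1}. (The basis-independence of $\xi$ noted after \eqref{xi} is what makes this a genuine scalar.) For the $\laz$-block the comparison is the one already sketched before the theorem: for $z, z' \in \laz$, equation \eqref{ric3} gives $\inp{\rc\, z}{z'}_\laz = \tfrac{m}{4}\scpp{z}{z'}$, while the defining equation \eqref{idad} of the adjoint gives $\scpp{z}{z'} = \scpp{z}{\id\, z'} = \inp{\id|^\dagger_\vp\, z}{z'}_\laz$. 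Thus $\inp{\rc\, z}{z'}_\laz = \tfrac{m}{4}\inp{\id|^\dagger_\vp\, z}{z'}_\laz$ for all $z'$, and nondegeneracy of $\inpe_\laz$ yields \eqref{rop2}.

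I anticipate no real obstacle here: each step is a direct unwinding of the definitions established above Theorem \ref{ric}. The single point requiring care — that an identity of bilinear forms upgrades to an identity of operators — is precisely what the standing nondegeneracy hypothesis on the center (and the inherited nondegeneracy of $\lav$) guarantees. If anything, the only thing worth stating explicitly is why $\rc$ respects the orthogonal decomposition; once that is in hand, the two block formulas are immediate.
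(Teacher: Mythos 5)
Your proposal is correct and follows the same route as the paper: the author likewise obtains the theorem by comparing the bilinear identities \eqref{ric1}--\eqref{ric3} of Theorem \ref{ric} with the defining equation \eqref{idad} of $\id|^\dagger_\vp$ (see the displayed comparison of \eqref{rop*} and \eqref{idad} just before the statement). You merely make explicit the nondegeneracy arguments that the paper leaves implicit, which is a welcome addition but not a different method.
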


\begin{remark}
In order to better understand the operator $\id|_\vp^\dagger$ it is
useful to work with matrix representations. Let $E$ be the matrix representing
$\inpe_\laz$ with respect to the basis $\{z_1,\hdots,z_p\}$, so that 
$E = \text{diag}\{\ve_1,\hdots,\ve_p\}$. Let $\Phi$ be the matrix representation
of $\scppe$ with respect to the same basis. In general $\Phi$ is symmetric, but
may be singular. 

Equation \eqref{rop*} may be written in matrix form as 
$(\rc|_\laz\, z)^T E z' = \tfrac{m}{4} z^T \Phi z'$. Taking advantage of the facts
that $E^T = E^{-1} = E$ and $\Phi^T = \Phi$, basic matrix manipulations yield
\begin{eqnarray*}
(\rc|_\laz\, z)^T E & = & \tfrac{m}{4}z^T \Phi \\
(\rc|_\laz\, z)^T & = & \tfrac{m}{4} z^T \Phi E^{-1} \\
(\rc|_\laz\, z)^T & = & \big[\tfrac{m}{4}(\Phi E^{-1})^T z\big]^T
\end{eqnarray*}
This implies that $\rc|_\laz = \tfrac{m}{4}(E^{-1})^T\Phi^T =
\tfrac{m}{4}E^{-1}\Phi$. That is,
the matrix representation of $\rc|_\laz$ with respect to the orthonormal basis 
$\{z_1,\hdots,z_p\}$ is 
\begin{equation}
\rc|_\laz\label{rvzmat} = \tfrac{m}{4}\, E^{-1} \Phi.
\end{equation}
Notice that $\xi := \sum_i\ve_i\,\vp(z_i)$ is the trace of the matrix $E^{-1} \Phi$.

If $N$ is of (pseudo-) $H$-type, then $\Phi = E$, $E^{-1}\Phi = I_p$, and we recover
all of the previously known results about the Ricci operator.
\end{remark}

\begin{remark}
Clearly $\rc(z) = 0$ implies that either $z \in \ker \Phi$ or $\Phi z$ is null
with respect to the metric on $\laz$. This fact will become useful later.
\end{remark}

Scalar curvature is the trace of the Ricci curvature operator.

\begin{theorem}
Let\label{scal} $(N,\inpe,\vp)$ be a modified $H$-type group. Since the metric
$\inpe$ is left-invariant, the scalar curvature is constant on $N$, and is
given by
\begin{equation}
S\label{scal1} = - \frac{m\xi}{4}.
\end{equation}
\end{theorem}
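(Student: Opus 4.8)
The plan is to exploit the fact, already recorded in the statement, that the scalar curvature is the trace of the Ricci operator $\rc$. Since the trace of a linear operator is independent of the choice of basis, no signs coming from the pseudo-Riemannian inner product intrude at this stage: I may simply sum the diagonal entries of $\rc$ in any convenient basis. Because Theorem \ref{rop} shows that $\rc$ preserves the splitting $\n = \laz \ds \lav$, the trace decomposes blockwise as
\begin{equation*}
S = \tr{\rc} = \tr{\rc|_\laz} + \tr{\rc|_\lav}.
\end{equation*}
Thus the entire computation reduces to evaluating these two block traces using the formulas \eqref{rop1} and \eqref{rop2}.

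The $\lav$-block is immediate: by \eqref{rop1}, $\rc|_\lav = -\tfrac{\xi}{2}\id|_\lav$ is a scalar multiple of the identity on the $m$-dimensional space $\lav$, so $\tr{\rc|_\lav} = -\tfrac{m\xi}{2}$. For the $\laz$-block I would pass to the matrix representation established in the remark following Theorem \ref{rop}: by \eqref{rvzmat}, $\rc|_\laz = \tfrac{m}{4}E^{-1}\Phi$ with respect to the pseudo-orthonormal basis $\{z_1,\dots,z_p\}$. Reading off the diagonal entries gives $(E^{-1}\Phi)_{kk} = \ve_k\,\vp(z_k)$, so that $\tr{E^{-1}\Phi} = \sum_k \ve_k\,\vp(z_k) = \xi$ by the definition \eqref{xi}. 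Hence $\tr{\rc|_\laz} = \tfrac{m}{4}\xi$, equivalently $\tr{\id|^\dagger_\vp} = \xi$.

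Combining the two blocks yields
\begin{equation*}
S = \tfrac{m}{4}\xi - \tfrac{m\xi}{2} = -\tfrac{m\xi}{4},
\end{equation*}
which is the asserted formula \eqref{scal1}; constancy of $S$ on $N$ needs no separate argument, since every pointwise curvature quantity was computed from the left-invariant data on $\n$ together with the left-invariant metric. The only point demanding care is the trace of the operator $\id|^\dagger_\vp$, whose matrix $E^{-1}\Phi$ need not be symmetric: one must resist reading off $\tr{\Phi}$ (the naive trace of the polarized form) and instead weight each diagonal entry $\vp(z_k)$ by the causal sign $\ve_k$, exactly as the definition of $\xi$ prescribes. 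This sign-weighting is what produces $\xi$ rather than $\sum_k \vp(z_k)$, and it is the one place where the pseudo-Riemannian signature genuinely enters the final formula.
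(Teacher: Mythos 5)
Your proof is correct and follows essentially the same route as the paper: both compute the scalar curvature as the trace of the Ricci operator, reducing to the sum $\sum_k \ve_k\,\vp(z_k) = \xi$ multiplied by $m$. The only difference is bookkeeping---you evaluate the two block traces $\tr{\rc|_\lav} = -\tfrac{m\xi}{2}$ and $\tr{\rc|_\laz} = \tfrac{m}{4}\xi$ separately from Theorem \ref{rop} and \eqref{rvzmat}, whereas the paper folds both contributions into a single double sum over the pseudo-orthonormal bases and evaluates it directly with \eqref{jmpol1}.
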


\begin{proof}
Using the pseudo-orthonormal bases of \eqref{xi}, we compute
\begin{eqnarray*}
S 	& = & -\frac{1}{4} \sum_{i = 1}^m \sum_{k = 1}^p \norm{z_k}^2_\laz
	\norm{e_i}_\lav^2 \inp{j(z_k)e_i}{j(z_k)e_i}_\lav \\
	& = & -\frac{1}{4} \sum_{i = 1}^m \sum_{k = 1}^p \norm{z_k}_\laz^2 \vp(z_k) \\
	& = & -\frac{m\xi}{4}.
\end{eqnarray*}
\end{proof}

\subsection{\normalsize \textit{Examples}}

We now return to the examples of Section \ref{de}. We compute the constant
$\xi$ and the matrix representation of the Ricci operator, and compute its
eigenvalues for each group.

\begin{example}
Consider\label{teh0ex1} the group $(N,g_0,\vp_0)$ of Example \ref{teh0}.
The matrices representing $g_0$ and $\scp{\ }{\,}_{\vp_0}$ on $\laz$ are
$$
E = \begin{pmatrix}
1 & 0 \\
0 & 1
\end{pmatrix},\ \ \text{and}\ \ 
\Phi_0 = \begin{pmatrix}
0 & 0 \\
0 & 1
\end{pmatrix}.
$$
Thus $E^{-1}\Phi_0 = \Phi_0$, $\xi = \tr{\Phi_0} = 1$, and the scalar curvature of
$(N,g_0)$ is $S = -\tfrac{1}{2}$. The Ricci operator is given by
\begin{eqnarray*}
\rc|_\lav & = & -\frac{1}{2}\begin{pmatrix}
1 & 0 \\
0 & 1
\end{pmatrix}, \text{\ and} \\[0.5 ex]
\rc|_\laz & = & \frac{1}{2} \begin{pmatrix}
0 & 0 \\
0 & 1
\end{pmatrix}.
\end{eqnarray*}
It has eigenvalues $-\tfrac{1}{2}$ on $\lav$ and $0, \tfrac{1}{2}$ on $\laz$.
\end{example}

\begin{example}
Consider\label{teh1ex1} the modified $H$-type group $(N,g_1,\vp_1)$ of Example
\ref{teh1}. The matrix representation of $g_1$ on $\laz$ with respect to the 
ordered basis $\{u_1,u_2\}$ is
$$
E = \begin{pmatrix}
1 & 0 \\
0 & -1
\end{pmatrix},
$$
while the matrix representation of $\scp{\ }{\,}_{\vp_1}$ with respect to the 
same basis is 
$$
\Phi_1 = \begin{pmatrix}
\tfrac{1}{2} & \tfrac{1}{2} \\[0.25 ex]
\tfrac{1}{2} & \tfrac{1}{2}
\end{pmatrix}.
$$
The matrix $E^{-1}\Phi_1$ is then
$$
E^{-1}\Phi_1 = \frac{1}{2}\begin{pmatrix}
1 & 1 \\
-1 & -1
\end{pmatrix}.
$$
Clearly $\xi = \tr{E^{-1}\Phi_1} = 0$, so that $(N,g_1)$ is scalar flat. This also implies
that the Ricci operator restricted to $\lav$ is zero. Moreover, the
Ricci operator on $\laz$, given by the matrix
$$
\rc|_\laz = \frac{1}{4} \begin{pmatrix}
1 & 1 \\
-1 & -1
\end{pmatrix},
$$
is 2-step nilpotent. Therefore the Ricci operator on the entire Lie algebra $\n$ has
only one eigenvalue, $\lambda = 0$.

The space $(N,g_1)$ was also found to be scalar flat but not Ricci flat in \cite{BO}.
\end{example}

\begin{example}
Next\label{teh2ex1} consider the group $(N,g_2,\vp_2)$ of Example \ref{teh2}. The
matrix representations of $g_2$ and $\scp{\ }{\,}_{\vp_2}$ with respect to the basis
$\{u_1,u_2\}$ are
$$
E = \begin{pmatrix}
1 & 0 \\
0 & -1
\end{pmatrix},\ \ \text{and\ \ } \Phi_2 = -\frac{1}{2} \begin{pmatrix}
1 & 1 \\
1 & 1
\end{pmatrix}.
$$
The matrix $E^{-1}\Phi_2$ is then
$$
E^{-1}\Phi_2 = -\frac{1}{2} \begin{pmatrix}
1 & 1 \\
-1 & -1
\end{pmatrix}.
$$
As in Example \ref{teh1ex1}, we see that $\xi = \tr{E^{-1}\Phi_2} = 0$, so the space
$(N,g_2)$ has zero scalar curvature. This also implies that the Ricci operator
vanishes on $\lav$. Again in analogy with Example \ref{teh1ex1}, the Ricci operator
restricted to $\laz$ is 2-step nilpotent,
$$
\rc|_\laz = -\frac{1}{4} \begin{pmatrix}
1 & 1 \\
-1 & -1
\end{pmatrix}. 
$$
Therefore $\rc$ has only one eigenvalue, $\lambda = 0$.
\end{example}

\begin{example}
Consider\label{teh3ex1} the group $(N,g_3,\vp_3)$ of Example \ref{teh3}. The matrices
for $g_3$ and $\scpe_{\vp_3}$ on $\laz$ are given by
$$
E = \begin{pmatrix}
1 & 0 \\
0 & 1
\end{pmatrix},\ \ \text{and}\ \ \Phi_3 = \begin{pmatrix}
0 & 0 \\
0 & -1
\end{pmatrix}.
$$
Thus $E^{-1}\Phi_3 = \Phi_3$, and $\xi = \tr{\Phi_3} = -1$. Therefore the scalar curvature
of $(N,g_3)$ is $S = +\tfrac{1}{2}$. The Ricci operator on $\n$ is given by
\begin{eqnarray*}
\rc|_\lav & = & \frac{1}{2} \begin{pmatrix}
1 & 0 \\
0 & 1
\end{pmatrix},\ \text{and} \\[0.25 ex]
\rc|_\laz & = & \frac{1}{2} \begin{pmatrix}
0 & 0 \\
0 & -1
\end{pmatrix}.
\end{eqnarray*}
The eigenvalues of $\rc$ are then $\tfrac{1}{2}$ on $\lav$ and $0, -\tfrac{1}{2}$
on $\laz$.
\end{example}

\section{Geometry of Modified $H$-Type Groups}

In this section we investigate the geometric consequences of Definition \ref{gh}.
Most of these properties can be stated in terms of the Ricci operator of 
Theorem \ref{rop}.

\subsection{\normalsize \textit{Isometry groups}}

Let $\n^\C = \laz^\C \ds \lav^\C$ be the complexification of the Lie algebra $\n$.
Since the Ricci operator of Theorem \ref{rop} respects the splitting $\n = \laz
\ds \lav$, then its complexification respects the induced splitting of $\n^\C$.

Recall the following lemma of \cite{BO}.

\begin{lemma}[\cite{BO}, Lemma 2]
Let\label{BOL2} $(N,\inpe)$ be a 2-step nilpotent Lie group such that $\inpe$ is a 
pseudo-Riemannian left-invariant metric for which the center is nondegenerate.
Assume
\begin{eqnarray}
\lav^\C & = & V_{\lambda_1} \ds \cdots \ds V_{\lambda_j}, \notag \\
\laz^\C & = & V_{\lambda_{j+1}} \ds \cdots \ds V_{\lambda_s},
\end{eqnarray}
for the different eigenvalues $\lambda_1, \hdots, \lambda_s$ of the Ricci
operator $\rc$, where $V_{\lambda_i}$ are the eigenspaces corresponding to
$\lambda_i$. Then every isometry of $N$ preserves the splitting $TN = \lav N
\ds \laz N$; that is, $\Isp(N) = \Iso(N)$. \eop
\end{lemma}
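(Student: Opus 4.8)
The approach is to use that the Ricci tensor is an isometry invariant, so the differential of an isometry intertwines the Ricci operator, and then let the hypothesis on the eigenspaces force the center to be preserved. The plan is to first reduce to isometries fixing the identity $e \in N$. Because $\inpe$ is left-invariant, each left translation $L_a$ is an isometry, and its differential carries the left-invariant distributions $\lav N$ and $\laz N$ onto themselves by construction. Hence for any $\phi \in \Iso(N)$ the composite $L_{\phi(e)^{-1}} \circ \phi$ fixes $e$, and a short computation with the chain rule, composing $\phi$ with suitable left translations on either side, shows that $\phi$ preserves the global splitting $TN = \lav N \ds \laz N$ as soon as every identity-fixing isometry preserves the algebraic splitting $\n = \laz \ds \lav$ of $T_e N$. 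So it suffices to treat isometries fixing $e$.

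The key step is the following: if $\phi$ is an isometry with $\phi(e) = e$, then $A := d\phi_e$ is a linear isometry of $(\n,\inpe)$ commuting with the Ricci operator, $A\,\rc = \rc\,A$. Indeed, an isometry preserves the Levi-Civita connection, hence the full curvature tensor and its trace, the Ricci tensor; the resulting naturality $d\phi_p \circ \rc_p = \rc_{\phi(p)} \circ d\phi_p$, evaluated at the fixed point $p = e$ and using that the left-invariant operator of Theorem \ref{rop} is $\rc_e$ under the identification $T_e N \iso \n$, yields the asserted commutation. Note this step needs only that $e$ is fixed, and nothing about homogeneity.

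Since $A$ commutes with $\rc$, its complexification $A^\C$ preserves each eigenspace of $\rc^\C$ on $\n^\C$. By hypothesis $\lav^\C = V_{\lambda_1} \ds \cdots \ds V_{\lambda_j}$ and $\laz^\C = V_{\lambda_{j+1}} \ds \cdots \ds V_{\lambda_s}$ are sums of eigenspaces attached to \emph{disjoint} sets of eigenvalues, so $A^\C$ preserves both $\lav^\C$ and $\laz^\C$; as $A$ is real (it commutes with complex conjugation), it therefore preserves $\lav$ and $\laz$ themselves. Combined with the reduction of the first paragraph, this gives $\Iso(N) \subseteq \Isp(N)$, and the reverse inclusion is immediate, so $\Isp(N) = \Iso(N)$.

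The main obstacle is not any single computation but the disjointness of the eigenvalue sets of $\rc|_\lav$ and $\rc|_\laz$: were a single eigenvalue shared between the two factors, $A$ could mix $\lav$ with $\laz$ and the argument would collapse. Thus the genuine content in applying this lemma lies in verifying that spectral separation, for instance from the explicit eigenvalues furnished by Theorem \ref{rop}, whereas the intertwining $A\,\rc = \rc\,A$ and the consequent preservation of eigenspaces are formal consequences of the isometry invariance of the Ricci tensor.
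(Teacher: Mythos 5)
The paper offers no proof of this lemma --- it is imported verbatim from \cite{BO}, Lemma 2, with the end-of-proof mark placed in the statement --- so there is no internal argument to compare yours against. Your proof is correct and is the standard one for this fact (essentially that of the cited source): reduce to identity-fixing isometries by composing with left translations on both sides, use isometry-invariance of the Ricci tensor to obtain $d\phi_e \circ \rc = \rc \circ d\phi_e$ at the fixed point, and invoke the hypothesis that $\lav^\C$ and $\laz^\C$ are sums of eigenspaces of $\rc$ attached to disjoint sets of eigenvalues to force $d\phi_e$ (hence, via the left-translation reduction, every $d\phi_p$) to preserve both factors.
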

Since $\Isa(N) = \Isp(N)$ for all groups with nondegenerate centers, this actually
implies that $\Isa(N) = \Isp(N) = \Iso(N)$ for all such groups satisfying the
hypotheses of the lemma.

\medskip

For modified $H$-type groups, the restriction of $\rc$ to $\lav$ has only one
eigenvalue, $-\tfrac{\xi}{2}$.
As long as $-\tfrac{\xi}{2}$ is not also an eigenvalue of $\rc|_\laz$, then the
eigenspace decomposition of $\n$ with respect to the Ricci operator respects 
the splitting $\n = \laz \ds \lav$. That is, if $V_{\lambda_i}$ are the eigenspaces
for the different eigenvalues $\lambda_i$ of $\rc|_\laz$, then
\begin{eqnarray}
\laz^\C & = & V_{\lambda_1} \ds \cdots \ds V_{\lambda_s}, \text{\ and} \notag \\
\lav^\C\label{evd} & = & V_{-\frac{\xi}{2}}.
\end{eqnarray}

Invoking Lemma \ref{BOL2} and recalling that modified $H$-type groups have
nondegenerate centers by definition, we obtain the following theorem about the
isometry groups of modified $H$-type groups.

\begin{theorem}
Let $(N,\inpe,\vp)$ be a modified $H$-type group. If $-\tfrac{\xi}{2}$ is
\emph{not} an eigenvalue of $\rc|_\laz$, then the following isometry groups
coincide: $\Isa(N) = \Isp(N) = \Iso(N)$. \eop
\end{theorem}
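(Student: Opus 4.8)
The plan is to obtain this statement as a direct application of Lemma \ref{BOL2}, using the explicit description of the Ricci operator furnished by Theorem \ref{rop}. The only real content is to verify that the hypothesis ``$-\tfrac{\xi}{2}$ is not an eigenvalue of $\rc|_\laz$'' is precisely the condition that places the problem inside the hypotheses of that lemma.

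First I would recall from Theorem \ref{rop} that $\rc$ preserves the splitting $\n = \laz \ds \lav$ and that $\rc|_\lav = -\tfrac{\xi}{2}\,\id|_\lav$. Passing to the complexification, the operator $\rc^\C$ still preserves $\laz^\C \ds \lav^\C$, so it is block diagonal with respect to this decomposition, and each of its eigenspaces is the direct sum of its intersections with $\laz^\C$ and $\lav^\C$. In particular, $\lav^\C$ is a single eigenspace, with the sole eigenvalue $-\tfrac{\xi}{2}$.

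Next I would invoke the hypothesis. Since the only eigenvalue of $\rc|_{\lav^\C}$ is $-\tfrac{\xi}{2}$, and this value is assumed \emph{not} to lie in the spectrum of $\rc|_{\laz^\C}$, the spectra of the two diagonal blocks are disjoint. Consequently no eigenspace of $\rc^\C$ can straddle the splitting: each eigenspace sits entirely in $\laz^\C$ or entirely in $\lav^\C$. This is exactly the eigenspace decomposition recorded in \eqref{evd}, and it is precisely the form demanded by the hypotheses of Lemma \ref{BOL2}.

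Finally, applying Lemma \ref{BOL2} yields $\Isp(N) = \Iso(N)$. Since every modified $H$-type group has nondegenerate center by Definition \ref{gh}, the identity $\Isa(N) = \Isp(N)$ holds, as noted immediately after the lemma, and concatenating the two gives the desired chain $\Isa(N) = \Isp(N) = \Iso(N)$. The ``hard part'' here is not a computation but a bookkeeping point: one must confirm that disjointness of the two block spectra genuinely forces every eigenspace into a single factor, so that the decomposition matches the form required by the lemma. Once that matching is established, Lemma \ref{BOL2} supplies the entire geometric content and the theorem follows at once.
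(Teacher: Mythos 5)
Your proposal is correct and follows essentially the same route as the paper, which presents this argument as the discussion immediately preceding the theorem (hence the theorem is stamped \eop): use Theorem \ref{rop} to see that $\rc|_{\lav}$ is the scalar $-\tfrac{\xi}{2}$, observe that the hypothesis forces the eigenspace decomposition \eqref{evd} to respect $\n^\C = \laz^\C \ds \lav^\C$, apply Lemma \ref{BOL2}, and append $\Isa(N)=\Isp(N)$ for nondegenerate centers. The only cosmetic difference is that you spell out the ``no eigenspace straddles the splitting'' bookkeeping explicitly, which the paper leaves implicit.
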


\begin{remark}
The groups $(N,g_1)$ and $(N,g_2)$ of Examples \ref{teh1}, \ref{teh1ex1}, \ref{teh2},
and \ref{teh2ex1} do not satisfy the hypothesis of the theorem; \emph{viz.}
\cite{BO}. 
\end{remark}

\subsection{\normalsize \textit{Totally geodesic subgroups}}

Let $z \in \laz$ and $x \in \lav$ be non-zero vectors. In $H$-type 
groups, the space $\n' := \span{z,x,j(z)x}$ is always a totally geodesic subalgebra of
$\n$, making $N' := \exp{\n'}$ a totally geodesic subgroup of $N$.
Modified $H$-type groups have the following analogue.

\begin{theorem}
Let\label{tgsg} $(N,\inpe,\vp)$ be a modified $H$-type Lie group. Let $z \in \laz$,
$x \in \lav$ be non-zero vectors with $\norm{x}_\lav^2 \neq 0$, and consider the
subspace
$\n' = \span{z,x,j(z)x}$. The submanifold $N' = \exp{\n'}$ is a totally geodesic
subgroup of $N$ if and only if $z$ is not in the kernel of $\rc|_\laz$ but is an
eigenvector of $\rc|_\laz$.
\end{theorem}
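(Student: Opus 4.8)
The plan is to reduce the totally geodesic condition to an algebraic condition on the Levi-Civita connection and then to recognize that condition as an eigenvalue equation for $\rc|_\laz$. A left-invariant submanifold $N' = \exp{\n'}$ carrying the induced metric is a totally geodesic subgroup precisely when $\n'$ is a nondegenerate subalgebra whose second fundamental form $\shp$ vanishes, which by the Gauss formula is equivalent to $\del{X}Y \in \n'$ for all $X, Y \in \n'$. First I would record the Gram matrix of $\{z, x, j(z)x\}$. Skew-adjointness of $j(z)$ gives $\inp{x}{j(z)x}_\lav = 0$, equation \eqref{ghj} gives $\inp{j(z)x}{j(z)x}_\lav = \vp(z)\norm{x}_\lav^2$, and since $z \in \laz$ is orthogonal to $\lav$ the matrix is diagonal with entries $\norm{z}_\laz^2$, $\norm{x}_\lav^2$, and $\vp(z)\norm{x}_\lav^2$. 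Hence $\n'$ is nondegenerate if and only if $\norm{z}_\laz^2 \neq 0$ and $\vp(z) \neq 0$, the only candidate null direction being $j(z)x$.

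Next I would run all nine connection terms through Theorem \ref{lcc}. Because $z$ is central and $j(z)^2 = -\vp(z)\,\id|_\lav$ by \eqref{ghjs}, one checks that $\del{z}{z} = 0$, $\del{z}{x} = \del{x}{z} = -\tfrac{1}{2}j(z)x$, $\del{z}{j(z)x} = \del{j(z)x}{z} = \tfrac{1}{2}\vp(z)\,x$, $\del{x}{x} = \del{j(z)x}{j(z)x} = 0$, all of which lie in $\n'$. The single exceptional pair is $\del{x}{j(z)x} = \tfrac{1}{2}\br{x}{j(z)x}$ (together with $\del{j(z)x}{x} = -\tfrac{1}{2}\br{x}{j(z)x}$), which lands in $\laz$. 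Therefore $\del{}{}$ preserves $\n'$ — equivalently $\shp \equiv 0$ and $\n'$ is a subalgebra — if and only if $\br{x}{j(z)x} \in \span{z}$.

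The key computation is to evaluate this bracket. For every $w \in \laz$, the defining relation $\inp{\br{x}{y}}{w} = \inp{y}{j(w)x}$ applied with $y = j(z)x$, followed by the polarized identity \eqref{jmpol2}, gives $\inp{\br{x}{j(z)x}}{w}_\laz = \inp{j(z)x}{j(w)x}_\lav = \scpp{z}{w}\norm{x}_\lav^2$. Comparing with \eqref{rop*}, namely $\scpp{z}{w} = \tfrac{4}{m}\inp{\rc\, z}{w}_\laz$, and using that $\inpe_\laz$ is nondegenerate, I obtain $\br{x}{j(z)x} = \tfrac{4\norm{x}_\lav^2}{m}\,\rc\, z$. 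Since $\norm{x}_\lav^2 \neq 0$, the bracket lies in $\span{z}$ exactly when $\rc\, z \in \span{z}$, that is, when $z$ is an eigenvector of $\rc|_\laz$.

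It remains to match this with the stated condition, and the coordination of the eigenvalue equation with the nondegeneracy of $\n'$ is where I expect the main obstacle to lie. If $z \in \ker(\rc|_\laz)$, then $\vp(z) = \tfrac{4}{m}\inp{\rc\, z}{z}_\laz = 0$, so $j(z)x$ is null and orthogonal to both $z$ and $x$; the subspace $\n'$ is then degenerate and admits no well-defined second fundamental form, so the kernel case must be discarded. Conversely, if $\rc\, z = \mu z$ with $\mu \neq 0$, then $\br{x}{j(z)x} = \tfrac{4\mu\norm{x}_\lav^2}{m}\,z \in \span{z}$ and $\vp(z) = \tfrac{4\mu}{m}\norm{z}_\laz^2$, so on the non-null locus $\norm{z}_\laz^2 \neq 0$ the space $\n'$ is nondegenerate and $\del{}{}$-invariant, hence totally geodesic. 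Assembling the two directions yields the claim: $N'$ is a totally geodesic subgroup if and only if $z \notin \ker(\rc|_\laz)$ and $z$ is an eigenvector of $\rc|_\laz$. The delicate point is that in indefinite signature $\rc|_\laz$ is self-adjoint but need not be diagonalizable, so one cannot reason from an orthonormal eigenbasis; instead one must keep the nondegeneracy of $\n'$ tied directly to $\vp(z) \neq 0$ throughout, which is precisely what forbids the kernel while admitting the genuine eigenvectors.
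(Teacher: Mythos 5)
Your computational core is identical to the paper's proof: you run the connection terms of Theorem \ref{lcc} over $\n' = \span{z,x,j(z)x}$, reduce everything to whether $\br{x}{j(z)x} \in \span{z}$, and then identify $\br{x}{j(z)x} = \tfrac{4}{m}\norm{x}_\lav^2\,\rc|_\laz\, z$ via \eqref{jmpol2} and \eqref{rop*}; this is exactly the paper's computation \eqref{tgsgp1} followed by \eqref{rop2}, and all of your formulas are correct.

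The gap lies in the definitional scaffolding you wrap around that computation. You take ``totally geodesic subgroup'' to presuppose that $\n'$ is nondegenerate, and you use degeneracy (equivalently $\vp(z) = 0$) to discard the kernel case. But that same convention breaks the converse direction of the theorem as stated. The theorem places no restriction on $\norm{z}_\laz^2$ (your remark that ``the only candidate null direction is $j(z)x$'' already tacitly assumes $z$ is non-null), and in indefinite signature $\rc|_\laz$ can have a \emph{null} eigenvector with nonzero eigenvalue: for any pseudo-$H$-type group with indefinite center (these are modified $H$-type by Example \ref{gen}) one has $\rc|_\laz = \tfrac{m}{4}\,\id|_\laz$, so every null $z \in \laz$ is such a vector. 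For this $z$ the theorem asserts that $N'$ \emph{is} a totally geodesic subgroup --- and indeed $\br{x}{j(z)x} = \norm{x}_\lav^2\, z \neq 0$, so $\n'$ is a Heisenberg subalgebra preserved by the connection --- yet if $\rc z = \mu z$ then $\vp(z) = \tfrac{4\mu}{m}\norm{z}_\laz^2 = 0$, so by your own Gram-matrix computation $\n'$ is degenerate and your definition denies it totally geodesic status. You sidestep this by proving the converse only ``on the non-null locus $\norm{z}_\laz^2 \neq 0$,'' but that is an extra hypothesis the theorem does not grant; as written, your backward implication is not established (and under your conventions is actually false) for null $z$. The paper avoids this by never imposing nondegeneracy: totally geodesic is read as $\nabla$-invariance of $\n'$, i.e.\ geodesics tangent to $N'$ remain in $N'$, which makes sense for degenerate $\n'$ as well, and the kernel case is excluded on algebraic rather than metric grounds --- when $\rc z = 0$ the bracket $\br{x}{j(z)x}$ vanishes, and Corollary \ref{tgsgc1} denies $N'$ subgroup status on that basis. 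To repair your argument you must decouple ``totally geodesic'' from nondegeneracy of the induced metric; keeping them tied together, as your final paragraph insists one should, is precisely what breaks the null-eigenvector case.
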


\begin{proof}
The Levi-Civita connection on $\n'$ is given by
\begin{eqnarray}
\del{z}x = \del{x}z & = & -\tfrac{1}{2} j(z)x, \notag \\
\del{z}j(z)x & = & \tfrac{1}{2} \vp(z)\, x, \text{\ and}\label{tgsc} \\ 
\del{x}j(z)x & = & \tfrac{1}{2} \br{x}{j(z)x}. \notag 
\end{eqnarray}
Thus, we must show that $z' := \br{x}{j(z)x}$ is proportional to $z$ if and only if
$z$ is an eigenvector of $\rc|_\laz$. For $a \in \laz$, we compute
\begin{eqnarray}
\inp{\br{x}{j(z)x}}{a}_\laz\label{tgsgp1} 	& = & \inp{\ad{x}j(z)x}{a}_\laz
\notag \\
						& = & \inp{j(z)x}{j(a)x}_\laz \\
						& = & \norm{x}_\lav^2\, \scpp{z}{a}. \notag
\end{eqnarray}
Therefore, $z' = \norm{x}_\lav^2\, \id|_\vp^\dagger \, z$.
By \eqref{rop2}, $z'= \norm{x}_\lav^2\, \tfrac{4}{m}\,
\rc\, z$, and $z'$ is proportional to $z$ if and only if $z$ is an
eigenvector of $\rc$.
\end{proof}

\begin{corollary}
Let\label{tgsgc1} $(N,\inpe,\vp)$ be a modified $H$-type Lie group. Let $z \in \laz$
and $x \in \lav$ be non-zero vectors. Suppose that either $z \in \ker \rc|_\laz$ or
$\norm{x}_\lav^2 = 0$ (or both), and consider the subspace $\n' = \span{z,x,j(z)x}$.
The submanifold $N' = \exp\n'$ is totally geodesic, but not a subgroup of $N$. 
\end{corollary}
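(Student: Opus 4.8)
The plan is to run the argument of Theorem \ref{tgsg} backwards. First I would show that under either hypothesis the central vector $z' := \br{x}{j(z)x}$ vanishes, then deduce total geodesy directly from the connection formulas \eqref{tgsc}, and finally use $z' = 0$ to diagnose the failure of the subgroup property. To see $z' = 0$, recall the identity \eqref{tgsgp1} from the proof of Theorem \ref{tgsg}: for every $a \in \laz$ one has $\inp{z'}{a}_\laz = \norm{x}_\lav^2\,\scpp{z}{a}$. If $\norm{x}_\lav^2 = 0$, the right-hand side is zero for all $a$. If instead $z \in \ker\rc|_\laz$, then \eqref{rop2} gives $\id|_\vp^\dagger z = 0$, so by \eqref{idad} we get $\scpp{z}{a} = \inp{\id|_\vp^\dagger z}{a}_\laz = 0$ for every $a$, and again the right-hand side vanishes. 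Since the center is nondegenerate and $z' \in \laz$, in both cases $z' = 0$.

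Total geodesy is then immediate. Substituting $z' = 0$ into \eqref{tgsc} leaves $\del{z}x = \del{x}z = -\tfrac{1}{2}j(z)x$, $\del{z}j(z)x = \tfrac{1}{2}\vp(z)\,x$ and $\del{x}j(z)x = 0$; together with $\del{z}z = 0$, $\del{x}x = 0$, $\del{j(z)x}j(z)x = 0$ and the remaining symmetric terms, every covariant derivative of a pair of the spanning vectors $z, x, j(z)x$ lands back in $\n' = \span{z,x,j(z)x}$. Hence $\n'$ is autoparallel, and $N' = \exp\n'$ is totally geodesic.

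The last assertion, that $N'$ is not a subgroup, is where I expect the genuine difficulty. The only bracket among the spanning vectors that can leave the span is $\br{x}{j(z)x} = z'$; since $z' \in \laz$ while $x, j(z)x \in \lav$, closure under the bracket would force $z'$ to be a multiple of $z$, and by \eqref{rop2} a \emph{nonzero} multiple is exactly the condition, isolated in Theorem \ref{tgsg}, that $z$ be an eigenvector of $\rc|_\laz$ with nonzero eigenvalue. Having shown $z' = 0$, I would conclude that $z \notin \br{\n'}{\n'}$, so the center direction $z$ is not realized as a commutator and $N'$ is not one of the totally geodesic subgroups produced in Theorem \ref{tgsg}. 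The delicate point to argue carefully is precisely this: with $z' = 0$ the subspace $\n'$ closes trivially, so one must show that it is the \emph{vanishing} of $\br{x}{j(z)x}$, and not merely its membership in $\n'$, that separates $N'$ from the Heisenberg-type subgroups of Theorem \ref{tgsg}.
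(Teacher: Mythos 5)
Your treatment of total geodesy is correct and is essentially the paper's own argument: the paper likewise quotes the connection formulas \eqref{tgsc} and uses \eqref{tgsgp1} to conclude $\del{x}j(z)x = 0$. Your extra care in actually deriving $\br{x}{j(z)x} = 0$ from nondegeneracy of $\inpe_\laz$ in each of the two cases --- via $\norm{x}_\lav^2 = 0$ in the one, and via $\id|_\vp^\dagger z = 0$ together with \eqref{idad} in the other --- fills in exactly the step the paper leaves implicit, and it is done correctly.

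On the ``not a subgroup'' clause, the difficulty you flag at the end is genuine, and you should not try to argue it away. With $\br{x}{j(z)x} = 0$, every bracket of elements of $\n' = \span{z,x,j(z)x}$ vanishes, so $\n'$ \emph{is} closed under the bracket --- it is an abelian subalgebra --- and since $N$ is simply connected and $2$-step nilpotent, the Baker--Campbell--Hausdorff formula shows $N' = \exp\n'$ is an abelian subgroup isomorphic to $\R^3$. The paper's own proof disposes of this point with the sentence ``$\n'$ is not a subalgebra since $\br{x}{j(z)x} = 0$,'' which is precisely the non sequitur you identify: vanishing of a bracket cannot obstruct closure under the bracket. (Indeed, the paper itself remarks two subsections later that this submanifold ``is a copy of $\R^3$.'') So the gap here lies in the statement, not in your argument. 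The defensible conclusion, and the one your last paragraph is reaching for, is that $N'$ fails to be a subgroup \emph{of the Heisenberg type produced in Theorem \ref{tgsg}}: the central direction $z$ does not lie in the derived algebra $\br{\n'}{\n'}$, so $N'$ degenerates from a copy of $H_3$ to a flat abelian factor. If you replace the final clause by that weaker assertion your proof is complete; as literally stated, the final clause is false and no proof of it can exist.
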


\begin{proof}
By \eqref{tgsc}, $\del{z}x, \del{x}z,$ and $\del{z}j(z)x$ are all in $\n'$. 
By \eqref{tgsgp1}, $\del{x}j(z)x = 0$. Therefore $N'$ is totally geodesic in $N$.
However, $\n'$ is not a subalgebra since $\br{x}{j(z)x} = 0$.
\end{proof}



\subsection{\normalsize \textit{Geodesics}}

In this section we give explicit formulas for the geodesics of a modified $H$-type
group in terms of the $j$-maps, quadratic form $\vp$, and the Ricci operator on
$\laz$. We follow the calculations of \cite{CP4}, making particular use of one
result that is stated here as a lemma. First, some preliminaries. 

Suppose $N$ is a connected, simply connected 2-step nilpotent Lie group.
Let $I$ be a real interval containing zero, and let $\g :I \to N$ be a geodesic
such that $\g(0) = 1 \in N$ and $\dg(0) = z_0 + x_0 \in \laz \ds \lav = \n$. 
Since the exponential map is a diffeomorphism for simply connected
nilpotent Lie groups, $\g(t) = \exp(z(t) + x(t))$.

Let $J = j(z_0)$ denote the skew adjoint transformation of $\lav$ determined by the
initial condition, and let $\lav^1 = \ker J$. The skew-adjointness of $J$ implies
that $\lav = \lav^1 \ds \lav^2$ is an orthogonal direct sum, where $\lav^2 =
\lav/\lav^1$. Notice that $J$ is invertible on $\lav^2$. Decompose $x_0 = x_1 +
x_2$, with $x_i \in \lav^i$.

Now let $\{\theta_1,\hdots,\theta_k\}$ be the distinct nonzero eigenvalues of
$J^2$. Decompose $\lav^2$ as the orthogonal direct sum $\bigoplus_{j = 1}^k
\lal{w}_j$, where $J$ leaves each $\lal{w}_j$ invariant and $J^2|_{\lal{w}_j} = 
\theta_j \id|_\lav$. Write $x_2 = \sum_{j = 1}^k w_j$ with each $w_j \in \lal{w}_j$.

Under these assumptions, we state the aforementioned result of \cite{CP4}.

\begin{lemma}[\cite{CP4}, Prop 4.15]
If\label{CPLem} $N$ is a 2-step nilpotent Lie group with nondegenerate center, and
$J^2$ diagonalizes, then
\begin{eqnarray}
z(t)\label{geolz} & = & tz_1(t) + z_2(t), \\
x(t)\label{geolx} & = & tx_1(t) + (e^{tJ} - I)J^{-1}x_2, 
\end{eqnarray}
where
\begin{eqnarray*}
z_1(t) & = & z_0 + \tfrac{1}{2}\br{x_1}{(e^{tJ} + I)J^{-1}x_2} + \tfrac{1}{2}
       \sum_{j = 1}^k\br{J^{-1}w_j}{w_j}, \\
z_2(t) & = & \br{x_1}{(I - e^{tJ})(J^{-2}x_2)} + \tfrac{1}{2}\br{e^{tJ}J^{-1}x_2}
       {J^{-1}x_2} \\
       &   & - \tfrac{1}{2} \sum_{i \neq j} \frac{1}{\theta_j - \theta_i} \left(
       \br{e^{tJ}Jw_i}{e^{tJ}J^{-1}w_j} - \br{e^{tJ}w_i}{e^{tJ}w_j}\right) \\
       &   & + \tfrac{1}{2}\sum_{i \neq j} \frac{1}{\theta_j - \theta_i} \left(
       \br{Jw_i}{J^{-1}w_j} - \br{w_i}{w_j}\right).
\end{eqnarray*}
\end{lemma}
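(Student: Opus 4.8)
The plan is to reduce the geodesic equation to a linear ODE for the left-translated velocity, solve that, and then recover the exponential coordinates $z(t)$ and $x(t)$ by integrating the (truncated) differential of $\exp$. Write the body velocity $u(t) = (L_{\g(t)^{-1}})_*\dg(t) = Z(t) + X(t) \in \laz \ds \lav$, so that $Z(0) = z_0$ and $X(0) = x_0$. For a left-invariant metric the covariant acceleration of $\g$, translated back to the identity, equals $\dot u + \del{u}u$, where $\del{u}u$ is evaluated from the left-invariant connection. Using \eqref{lcc1}--\eqref{lcc3} together with $\br{X}{X} = 0$ one finds $\del{u}u = -j(Z)X$, so the geodesic equation $\dot u + \del{u}u = 0$ splits along $\n = \laz \ds \lav$ into $\dot Z = 0$ and $\dot X = j(Z)X$. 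Hence $Z(t) \equiv z_0$, and writing $J = j(z_0)$ we obtain $X(t) = e^{tJ}x_0$.

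Next I would pass from the body velocity to the exponential coordinates. Since $N$ is connected, simply connected and $2$-step nilpotent, $\exp$ is a diffeomorphism and $\ad{\xi}^2 = 0$ for every $\xi \in \n$, so the differential of $\exp$ truncates and $u = \dot\xi - \tfrac{1}{2}\br{\xi}{\dot\xi}$ with $\xi(t) = z(t) + x(t)$. Because $\laz$ is central this reduces to $u = \dot z + \dot x - \tfrac{1}{2}\br{x}{\dot x}$, and comparing $\laz$- and $\lav$-components with $u = z_0 + e^{tJ}x_0$ gives $\dot x = e^{tJ}x_0$ and $\dot z = z_0 + \tfrac{1}{2}\br{x}{\dot x}$. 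Decomposing $x_0 = x_1 + x_2$ along $\lav^1 = \ker J$ and $\lav^2$, and using that $e^{tJ}$ fixes $\lav^1$ while $J$ is invertible on $\lav^2$, integration of $\dot x = e^{tJ}x_0$ from $0$ gives \eqref{geolx} at once.

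The real content is the integration of the central equation, $z(t) = t z_0 + \tfrac{1}{2}\int_0^t \br{x(s)}{\dot x(s)}\,ds$. Substituting the explicit $x(s)$ and $\dot x(s)$ and expanding the bracket bilinearly produces three types of terms: the $x_1$-$x_1$ term vanishes by antisymmetry; the mixed $x_1$-$x_2$ terms, integrated by parts using invertibility of $J$ on $\lav^2$, combine into the linearly growing piece $t\,\tfrac{1}{2}\br{x_1}{(e^{tJ}+I)J^{-1}x_2}$ and the bounded piece $\br{x_1}{(I - e^{tJ})J^{-2}x_2}$; and the $x_2$-$x_2$ term, after writing $x_2 = \sum_j w_j$ along the eigenspaces $\lal{w}_j$ of $J^2$, splits into diagonal blocks and off-diagonal cross terms. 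On each block $J^2 = \theta_j\,\id$, so $e^{sJ}$ admits a closed form and the diagonal integrals yield the constant $\tfrac{1}{2}\sum_j \br{J^{-1}w_j}{w_j}$ and the term $\tfrac{1}{2}\br{e^{tJ}J^{-1}x_2}{J^{-1}x_2}$, while the cross terms with $i \neq j$ are precisely where integrating $\br{e^{sJ}Jw_i}{e^{sJ}J^{-1}w_j}$ and $\br{e^{sJ}w_i}{e^{sJ}w_j}$ introduces the denominators $(\theta_j - \theta_i)^{-1}$. Collecting the coefficients of $t$ into $z_1(t)$ and the remainder into $z_2(t)$ reproduces the stated formulas.

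I expect this last step to be the main obstacle. The bookkeeping of the $x_2$-$x_2$ cross terms is delicate: one must integrate products of the one-parameter group $e^{sJ}$ over distinct eigenspaces of $J^2$ and repackage them, via the skew-adjointness of $J$ and the $J$-invariance of each $\lal{w}_j$, into the antisymmetric combinations carrying the factors $(\theta_j - \theta_i)^{-1}$. Tracking signs, separating the genuinely linear-in-$t$ contributions from the oscillatory ones, and checking consistency at $t = 0$ is where essentially all of the effort lies; everything preceding it is routine linear algebra and integration.
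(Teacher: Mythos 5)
The paper gives no proof of this lemma---it is imported verbatim from \cite{CP4}, Prop.~4.15---and your proposal correctly reconstructs the standard Eberlein/Cordero--Parker derivation used there: reduce the geodesic equation to $\dot Z = 0$, $\dot X = JX$ for the left-translated velocity, pass through the truncated differential of $\exp$ (valid since $\ad{\xi}^2 = 0$) to get $\dot x = e^{tJ}x_0$ and $\dot z = z_0 + \tfrac{1}{2}\br{x}{\dot x}$, and integrate. The term-by-term bookkeeping you outline checks out (e.g.\ the diagonal $x_2$--$x_2$ contribution $\br{e^{sJ}J^{-1}w_j}{e^{sJ}w_j}$ is constant equal to $\br{J^{-1}w_j}{w_j}$, producing the linear-in-$t$ piece), so this is essentially the same proof as in the cited source.
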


For modified $H$-type groups, the situation is simplified considerably.
The map $J^2$ is diagonalizable for all $z_0 \in \laz$ and has only one eigenvalue,
$-\vp(z_0)$. This eliminates the need for any $\lal{w}$-decomposition, thus
eliminating many of the terms in $z_1$ and $z_2$ of the 
lemma. Further, the maps $J^{-1}$ and $e^{tJ}$ may each be written simply in 
terms of $\vp(z_0)$ and $j(z_0)$. The following lemma is straightforward to prove.

\begin{lemma}
Let\label{Jcalc} $(N,\inpe,\vp)$ be a modified $H$-type Lie group. Let $z_0 \in 
\laz$ such
that $\vp_0 := \vp(z_0) \neq 0$, and denote by $J$ the transformation $j(z_0) \in
\End(\lav)$. Then
\begin{eqnarray*}
J^{-1} & = & -\frac{1}{\vp_0} J, \text{\ and} \\
e^{tJ} & = & \begin{cases}
\cos\left(t\sqrt{\vp_0}\right) I +
\frac{1}{\sqrt{\vp_0}}\sin\left(t\sqrt{\vp_0}\right)J & \text{if\ }
\vp_0 > 0,\\[1 ex]
\cosh\left(t\sqrt{\abs{\vp_0}}\right) I +
\frac{1}{\sqrt{\abs{\vp_0}}}\sinh\left(t\sqrt{\abs{\vp_0}}\right)J & \text{if\ }
\vp_0 < 0,
\end{cases} \\[1 ex]
e^{tJ}J^{-1} & = & \begin{cases}
\frac{-\cos\left(t\sqrt{\vp_0}\right)}{\vp_0} J +
\frac{1}{\sqrt{\vp_0}}\sin\left(t\sqrt{\vp_0}\right)I & \text{if\ }
\vp_0 > 0,\\[1 ex]
\frac{-\cosh\left(t\sqrt{\abs{\vp_0}}\right)}{\vp_0} J +
\frac{1}{\sqrt{\abs{\vp_0}}}\sinh\left(t\sqrt{\abs{\vp_0}}\right)I & \text{if\ }
\vp_0 < 0.
\end{cases}
\end{eqnarray*}\eop
\end{lemma}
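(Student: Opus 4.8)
The plan is to reduce the entire lemma to the single algebraic identity $J^2 = -\vp_0\,\id|_\lav$, which is nothing but equation \eqref{ghjs} evaluated at $z = z_0$. Once this relation is in hand, all three formulas are elementary consequences, because every even power of $J$ is a scalar multiple of $\id|_\lav$ and every odd power is a scalar multiple of $J$ itself, so the infinite series defining $e^{tJ}$ collapses to a two-term expression.

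First I would establish the formula for $J^{-1}$. Since $\vp_0 \neq 0$, the operator $J^2 = -\vp_0\,\id|_\lav$ is invertible, hence so is $J$. Multiplying $J$ on the right by $-\tfrac{1}{\vp_0}J$ gives $-\tfrac{1}{\vp_0}J^2 = -\tfrac{1}{\vp_0}(-\vp_0\,\id|_\lav) = \id|_\lav$, so by uniqueness of inverses $J^{-1} = -\tfrac{1}{\vp_0}J$.

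Next I would compute $e^{tJ}$ directly from its defining power series. Using $J^{2k} = (-\vp_0)^k\,\id|_\lav$ and $J^{2k+1} = (-\vp_0)^k J$, I would separate the series into its even- and odd-index parts, collecting the $\id|_\lav$ and $J$ contributions respectively. When $\vp_0 > 0$ one has $(-\vp_0)^k = (-1)^k \vp_0^k$, and the two subseries are recognized as the Taylor expansions of $\cos(t\sqrt{\vp_0})$ and $\tfrac{1}{\sqrt{\vp_0}}\sin(t\sqrt{\vp_0})$; when $\vp_0 < 0$ one has $(-\vp_0)^k = \abs{\vp_0}^k$ with no alternating sign, and the identical bookkeeping produces $\cosh(t\sqrt{\abs{\vp_0}})$ and $\tfrac{1}{\sqrt{\abs{\vp_0}}}\sinh(t\sqrt{\abs{\vp_0}})$. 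This yields the stated two-case formula for $e^{tJ}$.

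Finally, the formula for $e^{tJ}J^{-1}$ follows by multiplying the expression just obtained by $J^{-1} = -\tfrac{1}{\vp_0}J$ and again invoking $J^2 = -\vp_0\,\id|_\lav$ to convert the resulting $J^2$ term back into a multiple of $\id|_\lav$. There is no genuine obstacle here; the only place any care is required is the sign bookkeeping in the $\vp_0 < 0$ case, where one must keep $\vp_0 = -\abs{\vp_0}$ straight while simplifying, but this is routine. The whole lemma is essentially a corollary of the one relation $J^2 = -\vp_0\,\id|_\lav$.
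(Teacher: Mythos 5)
Your argument is correct and is exactly the routine verification the paper intends: the paper omits the proof entirely, remarking only that the lemma ``is straightforward to prove,'' and the intended route is precisely your reduction to $J^2 = -\vp_0\,\id|_\lav$ followed by splitting the exponential series into even and odd parts. All of your sign bookkeeping checks out, including the $\vp_0<0$ case and the final multiplication by $J^{-1}$.
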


%

If $\vp(z_0) = 0$, then $\ker J = \lav$ so that $x_1 = x_0$ and $x_2 = 0$. The 
formulas of Lemma \ref{CPLem} reduce to 
\begin{eqnarray*}
z(t) & = & tz_0, \\
x(t) & = & tx_0.
\end{eqnarray*}
If $\vp(z_0) \neq 0$, then $\ker J = 0$ so that $x_1 = 0$ and $x_2 = x_0$. As 
mentioned above, this $x_2$ does not need to be decomposed any further since $J^2$
has only one eigenvalue. In this case, the formulas of Lemma \ref{CPLem} reduce to
\begin{eqnarray*}
z(t) & = & t\left(z_0 + \frac{1}{2}\br{J^{-1}x_0}{x_0}\right) + \frac{1}{2}
\br{e^{tJ}J^{-1}x_0}{J^{-1}x_0}, \\[0.5 ex]
x(t) & = & (e^{tJ} - I)J^{-1} x_0.
\end{eqnarray*}

Carefully applying the formulas of Lemma \ref{Jcalc}, replacing $J$ with $j(z_0)$,
and recalling from the proof of Theorem \ref{tgsg} that $\br{x_0}{j(z_0)x_0} =
\tfrac{4}{m}\norm{x_0}_\lav^2 \rc|_\laz z_0$, one obtains the following theorem.

\begin{theorem}
Suppose\label{geo} $(N,\inpe,\vp)$ is a modified $H$-type Lie group.
Let $\g :I \to N$, $\g(t) = \exp(z(t) + x(t))$, be a geodesic with $\g(0) = 1 \in N$
and $\dg(0) = z_0 + x_0 \in \n$, and suppose that both $z_0$ and $x_0$ are nonzero.
Let $\vp_0 = \vp(z_0)$.

\medskip

\nin If $\vp(z_0) = 0$, then $\g$ is given by
\begin{eqnarray}
z(t)\label{geoz0} & = & t z_0, \\
x(t)\label{geox0} & = & t x_0;
\end{eqnarray}

\nin if $\vp(z_0) > 0$, then $\g$ is given by
\begin{eqnarray}
z(t)\label{geoz+} & = & tz_0 + \left(\frac{4\sin\left(t\sqrt{\vp_0}\right) -
2t\sqrt{\vp_0}}
{m(\vp_0)^{3/2}}\right)\norm{x_0}_\lav^2\rc|_\laz z_0, \\[0.25 ex] 
x(t)\label{geox+} & = & \left(\frac{1 - \cos\left(t\sqrt{\vp_0}\right)}
{\vp_0}\right)j(z_0)x_0 + \left(\frac{\sin\left(t\sqrt{\vp_0}\right)}
{\sqrt{\vp_0}}\right)x_0;
\end{eqnarray}

\nin and if $\vp(z_0) < 0$, then $\g$ is given by
\begin{eqnarray}
z(t)\label{geoz-} & = & tz_0 + \left(\frac{4\sinh\left(t\sqrt{\abs{\vp_0}}\right) -
2t\sqrt{\abs{\vp_0}}}
{m\vp_0\sqrt{\abs{\vp_0}}}\right)\norm{x_0}_\lav^2\rc|_\laz z_0, \\[0.25 ex] 
x(t)\label{geox-} & = & \left(\frac{1 - \cosh\left(t\sqrt{\abs{\vp_0}}\right)}
{\vp_0}\right) j(z_0)x_0 
+ \left(\frac{\sinh\left(t\sqrt{\abs{\vp_0}}\right)}{\sqrt{\abs{\vp_0}}}\right)x_0.
\end{eqnarray} \eop
\end{theorem}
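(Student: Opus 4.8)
The plan is to derive the geodesic equations by specializing the general formula of Lemma~\ref{CPLem} to the modified $H$-type setting, where the operator $J^2 = j(z_0)^2$ is maximally simple. By \eqref{ghjs} we have $J^2 = -\vp(z_0)\,\id|_\lav$, so $J^2$ is a scalar multiple of the identity; it is therefore automatically diagonalizable and carries the single eigenvalue $-\vp(z_0)$. This is precisely the degenerate instance of Lemma~\ref{CPLem} in which no genuine $\lal{w}$-decomposition occurs: one may take $k = 1$, $\lal{w}_1 = \lav^2$, and $w_1 = x_2$, so that every sum $\sum_{i \neq j}$ is empty and the associated terms in $z_2(t)$ drop out identically.

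First I would split into the three cases according to the sign of $\vp_0 := \vp(z_0)$. When $\vp_0 = 0$ we have $J = 0$, hence $\ker J = \lav$, $x_1 = x_0$, $x_2 = 0$, and Lemma~\ref{CPLem} collapses to $z(t) = t z_0$ and $x(t) = t x_0$, which is \eqref{geoz0}--\eqref{geox0}. When $\vp_0 \neq 0$ the map $J$ is invertible, so $\ker J = 0$, $x_1 = 0$, $x_2 = x_0$, and the formulas of Lemma~\ref{CPLem} reduce to
\begin{eqnarray*}
z(t) & = & t\left(z_0 + \half\br{J^{-1}x_0}{x_0}\right) + \half\br{e^{tJ}J^{-1}x_0}{J^{-1}x_0}, \\
x(t) & = & (e^{tJ} - I)J^{-1}x_0.
\end{eqnarray*}

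Next I would substitute the closed forms for $J^{-1}$ and $e^{tJ}$ provided by Lemma~\ref{Jcalc}, using the trigonometric expressions when $\vp_0 > 0$ and the hyperbolic ones when $\vp_0 < 0$. For $x(t)$ this is immediate: writing $x(t) = e^{tJ}J^{-1}x_0 - J^{-1}x_0$ and using $-J^{-1} = \tfrac{1}{\vp_0}J$ collects the two summands displayed in \eqref{geox+} and \eqref{geox-}. For $z(t)$ the two bracket terms must be expanded. The key algebraic simplifications are that both $\br{J^{-1}x_0}{x_0}$ and $\br{e^{tJ}J^{-1}x_0}{J^{-1}x_0}$ reduce to scalar multiples of the single bracket $\br{x_0}{j(z_0)x_0}$: the term $\br{j(z_0)x_0}{j(z_0)x_0}$ arising in the second expression vanishes by antisymmetry of $\bre$, while skew-symmetry gives $\br{j(z_0)x_0}{x_0} = -\br{x_0}{j(z_0)x_0}$. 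Substituting the identity $\br{x_0}{j(z_0)x_0} = \tfrac{4}{m}\norm{x_0}_\lav^2\,\rc|_\laz z_0$ from the proof of Theorem~\ref{tgsg} then expresses $z(t)$ as $t z_0$ plus a scalar multiple of $\norm{x_0}_\lav^2\,\rc|_\laz z_0$.

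I expect the only delicate point to be the final bookkeeping in $z(t)$: one must combine the $t$-linear contribution from $t\,z_1$ with the $\sin$ (resp.\ $\sinh$) contribution from $z_2$ over the common denominator $m\,\vp_0^{3/2}$ (resp.\ $m\,\vp_0\sqrt{\abs{\vp_0}}$), tracking carefully both the sign of $-\tfrac{1}{\vp_0}$ inside $J^{-1}$ and the sign flip produced by antisymmetry. A reliable safeguard is to verify at the end that the resulting curves meet the initial conditions $z(0) = x(0) = 0$, $\dot{z}(0) = z_0$, and $\dot{x}(0) = x_0$; in particular $\dot{z}(0) = z_0$ forces the coefficient of $\norm{x_0}_\lav^2\,\rc|_\laz z_0$ to vanish to first order in $t$, which uniquely determines that coefficient and catches any slip between its $t$-linear and $\sin/\sinh$ pieces.
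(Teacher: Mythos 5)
Your plan reproduces the paper's own derivation step for step: the paper proves Theorem \ref{geo} precisely by specializing Lemma \ref{CPLem} to the case where $J^2 = -\vp_0\,\id|_\lav$ has a single eigenvalue (so the $\lal{w}$-decomposition and the $i \neq j$ sums disappear), substituting the closed forms for $J^{-1}$, $e^{tJ}$, and $e^{tJ}J^{-1}$ from Lemma \ref{Jcalc}, and converting $\br{x_0}{j(z_0)x_0}$ into $\tfrac{4}{m}\norm{x_0}_\lav^2\,\rc|_\laz z_0$ via the computation in the proof of Theorem \ref{tgsg}. Your intermediate reductions --- the case split on the sign of $\vp_0$, the vanishing of $\br{j(z_0)x_0}{j(z_0)x_0}$, the antisymmetry flip in $\br{J^{-1}x_0}{x_0}$ --- are exactly the right ones, and the $x(t)$ formulas come out as in \eqref{geox+} and \eqref{geox-}.

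The substantive warning concerns the ``final bookkeeping'' you defer. Carrying it out as you describe gives, for $\vp_0 > 0$,
$$
z(t) = tz_0 + \frac{t\sqrt{\vp_0} - \sin\left(t\sqrt{\vp_0}\right)}{2\,\vp_0^{3/2}}\,\br{x_0}{j(z_0)x_0}
= tz_0 + \left(\frac{2t\sqrt{\vp_0} - 2\sin\left(t\sqrt{\vp_0}\right)}{m\,\vp_0^{3/2}}\right)\norm{x_0}_\lav^2\,\rc|_\laz z_0,
$$
with the analogous hyperbolic expression when $\vp_0 < 0$; this does not agree with the coefficient $\left(4\sin\left(t\sqrt{\vp_0}\right) - 2t\sqrt{\vp_0}\right)/\left(m\vp_0^{3/2}\right)$ printed in \eqref{geoz+}. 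Your own proposed safeguard adjudicates the discrepancy: the displayed formula satisfies $\dot{z}(0) = z_0$, whereas \eqref{geoz+} yields $\dot{z}(0) = z_0 + \tfrac{2}{m\vp_0}\norm{x_0}_\lav^2\,\rc|_\laz z_0$. So when your answer disagrees with the printed statement, do not ``correct'' your computation to match it; the same check against the conservation law $\dot{z}(t) + \tfrac{1}{2}\br{\dot{x}(t)}{x(t)} \equiv z_0$ confirms the version above. A second, smaller point: in the case $\vp_0 = 0$ you (like the paper) pass from $J^2 = 0$ to $\ker J = \lav$; this is automatic only when $\inpe_\lav$ is definite on the image of $J$, since in indefinite signature a nonzero skew-adjoint $J$ with totally null image can square to zero. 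As the paper makes the same tacit assumption, this is not a gap relative to its argument, but it deserves a sentence if you want the $\vp_0 = 0$ case airtight.
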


Recalling that it is possible for $\norm{x_0}_\lav^2\rc|_\laz z_0$ to vanish
while neither $x_0$ nor $z_0$ are zero, we obtain the following Corollary.

\begin{corollary}
Suppose\label{geoc} $(N,\inpe,\vp)$ is a modified $H$-type Lie group.
Let $\g :I \to N$, $\g(t) = \exp(z(t) + x(t))$, be a geodesic with $\g(0) = 1$
and $\dg(0) = z_0 + x_0 \in \n$, supposing that both $z_0$ and $x_0$ are nonzero.
If $\norm{x_0}_\lav^2 = 0$ or $z_0 \in \ker(\rc|_\laz)$, then the $\laz$-component
of $\g$ is simply given by $z(t) = tz_0$. \eop
\end{corollary}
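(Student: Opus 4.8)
The plan is to read the result directly off the three case formulas of Theorem~\ref{geo}, whose hypotheses are tailored precisely to annihilate the correction term in the $\laz$-component of the geodesic. First I would dispose of the case $\vp(z_0) = 0$: here \eqref{geoz0} already gives $z(t) = tz_0$ outright, with no correction term, so the conclusion holds irrespective of either hypothesis.

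Next I would handle the cases $\vp(z_0) > 0$ and $\vp(z_0) < 0$ simultaneously by exploiting the common structure of \eqref{geoz+} and \eqref{geoz-}. In both, the $\laz$-component has the form
\[
z(t) = tz_0 + c(t)\,\norm{x_0}_\lav^2\,\rc|_\laz z_0,
\]
where $c(t)$ is the scalar coefficient (built from $\sin$ or $\sinh$, according to the sign of $\vp_0$) appearing in the displayed formulas. The essential observation is that the entire deviation of $z(t)$ from the linear term $tz_0$ is carried by the single vector factor $\norm{x_0}_\lav^2\,\rc|_\laz z_0$.

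Finally I would invoke the hypotheses. If $\norm{x_0}_\lav^2 = 0$, the scalar factor $\norm{x_0}_\lav^2$ in the correction term vanishes; if instead $z_0 \in \ker(\rc|_\laz)$, then $\rc|_\laz z_0 = 0$. In either case the correction term is identically zero and $z(t) = tz_0$, as claimed. I anticipate no genuine obstacle, since the statement is an immediate consequence of the factored form of the geodesic equations already established in Theorem~\ref{geo}. The only point meriting care is checking that the conclusion is uniform across all signs of $\vp_0$ --- the $\vp_0 = 0$ case being trivial and the two $\vp_0 \neq 0$ cases sharing the common factored form above --- so that no separate argument is required in any regime.
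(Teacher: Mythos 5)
Your argument is correct and is exactly the one the paper intends: the corollary is stated as an immediate consequence of Theorem \ref{geo}, with the preceding sentence noting precisely that the correction term $\norm{x_0}_\lav^2\,\rc|_\laz z_0$ can vanish while $z_0$ and $x_0$ are nonzero. Your case analysis over the sign of $\vp_0$ and the observation that the deviation from $tz_0$ is carried entirely by that single vector factor match the paper's (unwritten) reasoning.
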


Just by looking at the formulas for the geodesics in Theorem \ref{geo} it is clear
that every geodesic lives in a submanifold $M = \exp \lal{m}$, where
$\lal{m} = \span{z_0, x_0, j(z_0)x_0, \rc(z_0)}$. If $z_0$ is an eigenvector of $\rc$
corresponding to a non-zero eigenvalue and $x_0$ is non-null, then by the previous
section $M$ is a totally geodesic subgroup that is isomorphic to the Heisenberg 
group $H_3$. 

If either $x_0$ is null or $\rc(z_0) = 0$, then by Corollary \ref{tgsgc1}, 
$\br{x_0}{j(z_0)x_0} = 0 = \rc(z_0)$, $\lal{m} = \span{z_0,x_0,j(z_0)x_0}$, and $M$
is totally geodesic but not a subgroup. In fact, it is a copy of $\R^3$.

If $x_0$ is non-null and $z_0$ is not in $\ker(\rc)$ but is also not an eigenvector
of $\rc$, then $M$ is a subgroup isomorphic to $H_3(\R) \times \R$. Indeed, by the
previous section, $\br{x_0}{j(z_0)x_0} = \tfrac{4}{m}\norm{x_0}_\lav^2\rc z_0$.
Thus $z_0$ plays the role of the central extension.


\subsection{\normalsize \textit{Sectional curvatures of semi-central planes}}

In the Riemannian case, it is shown in \cite{Prip} that the generalized Heisenberg
groups of Example \ref{Rgheis} are characterized among nilpotent Lie groups by the
following property:
\begin{quote}
\textit{
Every\label{Prip1} plane $\Pi$ spanning one central and one non-central direction has
sectional curvature $K(\Pi) = c$.
}
\end{quote}
In particular, every semi-central plane $\Pi$ in an $H$-type group has sectional 
curvature $K(\Pi) = \tfrac{1}{4}$. In this section we investigate the modified $H$-%
type analogue to this result.

\medskip

Let $(N,\inpe,\vp)$ be a modified $H$-type Lie group, and $\Pi = \span{z,x}$ be a 
nondegenerate plane spanned by $z \in \laz$, $x \in \lav$. Since $\Pi$ is assumed
to be nondegenerate, we may assume that $\norm{z}^2 = \ve_z = \pm 1$ and $\norm{x}^2
= \ve_x = \pm 1$. The sectional curvature of $\Pi$ is given in \eqref{sec2} by
$$
K(\Pi) = \tfrac{1}{4} \escr{K}(z) = \tfrac{1}{4} \ve_z \vp(z).
$$
If $\vp(z) = \ve_z 4c$, $c \neq 0$, as in Example \ref{Rgheis}, then every
nondegenerate semi-central plane has curvature $K(\Pi) = c$, as in the Riemannian
case. Indeed,
\begin{equation}
K(\Pi)\label{scsec1} = \tfrac{1}{4} \ve_z \vp(z) = \tfrac{1}{4} \ve_z^2 4 c= c.
\end{equation}
In particular, nondegenerate semi-central planes in pseudo-$H$-type groups have
sectional curvature $K(\Pi) = \tfrac{1}{4}$.

The converse also holds.

\begin{theorem}
Let\label{scsec2} $(N,\inpe)$ be a 2-step nilpotent Lie group with left-invariant
metric $\inpe$ making the center nondegenerate. If the sectional curvature of every
nondegenerate semi-central plane $\Pi$ is constant, $K(\Pi) = c$, then $(N,\inpe)$ is
a modified $H$-type group with quadratic form $\vp = 4c\norm{\cdot}_\laz^2$.
\end{theorem}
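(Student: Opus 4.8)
The plan is to read off the constancy hypothesis through the general sectional-curvature formula and then promote it to the defining identity \eqref{ghjs}. For any 2-step nilpotent group with nondegenerate center and any pseudo-orthonormal pair $z \in \laz$, $e \in \lav$, the first line of the proof of Theorem \ref{sec} --- which uses only \eqref{riem4} together with the defining relation $\inp{\ad{e}y}{z} = \inp{y}{j(z)e}$, and not the modified $H$-type hypothesis --- gives
\[
K(z,e) = \tfrac14\,\ve_z\,\ve_e\,\inp{j(z)e}{j(z)e}_\lav.
\]
Hence $K(\Pi) = c$ for every nondegenerate semi-central plane $\Pi = \span{z,x}$ says precisely that $\inp{j(z)x}{j(z)x}_\lav = 4c\,\ve_z\ve_x$ for all non-null unit $z \in \laz$, $x \in \lav$. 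Since $z \mapsto j(z)$ is linear and both sides are homogeneous of degree two in $x$ and in $z$, rescaling promotes this to
\[
\inp{j(z)x}{j(z)x}_\lav = 4c\,\norm{z}_\laz^2\,\norm{x}_\lav^2
\]
for all non-null $z \in \laz$ and all non-null $x \in \lav$. Writing $\vp(z) := 4c\,\norm{z}_\laz^2$, the objective is to upgrade this to $j(z)^2 = -\vp(z)\,\id|_\lav$ for every $z$.

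First I would fix a non-null $z$ and set $G_z := -j(z)^2 - 4c\,\norm{z}_\laz^2\,\id|_\lav \in \End(\lav)$. Because $j(z)$ is skew-adjoint, $-j(z)^2$ is self-adjoint, so $G_z$ is self-adjoint, and the displayed identity reads $\inp{x}{G_z x}_\lav = 0$ for every non-null $x$. The null cone of $(\lav,\inpe_\lav)$ is the zero set of the nonzero quadratic polynomial $x \mapsto \norm{x}_\lav^2$, hence nowhere dense, so its complement (the non-null vectors) is dense; the polynomial $x \mapsto \inp{x}{G_z x}_\lav$ vanishes on this dense set and therefore vanishes identically. A self-adjoint operator whose associated quadratic form is identically zero is itself zero, by polarization and nondegeneracy of $\inpe_\lav$. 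Thus $G_z = 0$, i.e.\ $j(z)^2 = -4c\,\norm{z}_\laz^2\,\id|_\lav$ for every non-null $z$.

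The remaining and only delicate point is the null directions of the center, about which the hypothesis is silent because the corresponding planes $\span{z,x}$ are degenerate. I would dispose of these by the same density principle applied in the center variable: the map $z \mapsto j(z)^2 + 4c\,\norm{z}_\laz^2\,\id|_\lav$ is a homogeneous quadratic --- hence polynomial --- map $\laz \to \End(\lav)$ (linearity of $z \mapsto j(z)$), and by the previous paragraph it vanishes on the dense set of non-null $z$, so it vanishes on all of $\laz$. Therefore $j(z)^2 = -\vp(z)\,\id|_\lav$ for every $z \in \laz$ with $\vp = 4c\,\norme_\laz^2$, which is exactly \eqref{ghjs}; so $(N,\inpe,\vp)$ is of modified $H$-type with the asserted quadratic form. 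The main obstacle is thus not computational but conceptual --- the passage from non-null to null vectors in both $\lav$ and $\laz$ --- and it is handled uniformly by the observation that every quantity in sight is a polynomial in $z$ and in $x$ and that non-null vectors are dense.
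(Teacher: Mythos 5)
Your proposal is correct and follows the same route as the paper: read the constancy hypothesis through the formula $K(z,x)=\tfrac14\,\ve_z\ve_x\inp{j(z)x}{j(z)x}_\lav$, deduce $j(z)^2=-4c\,\ve_z\,\id|_\lav$ for non-null unit $z$, and extend to all of $\laz$. You merely make explicit the density/polarization arguments that the paper compresses into ``we deduce'' and ``by linearity,'' which is a welcome elaboration rather than a different proof.
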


\begin{proof}
Let $z \in \laz$ and $x \in \lav$ such that $\ve_z = \norm{z}_\laz^2 = \pm 1$ and
$\ve_x = \norm{x}_\lav^2 = \pm 1$, and consider the plane $\Pi = \span{z,x}$. The
sectional curvature of $\Pi$ is given in \cite{CP4} by
$$
K(z,x) = \tfrac{1}{4} \ve_z \ve_x \inp{j(z)x}{j(z)x}_\lav.
$$
Solving for $\inp{j(z)x}{j(z)x}_\lav$ yields
$$
\inp{j(z)x}{j(z)x}_\lav = 4 \ve_z \ve_x K(z,x).
$$
Recalling that $\ve_x = \inp{x}{x}_\lav$, the assumption $K(\Pi) = c$ implies that
$$
\inp{j(z)x}{j(z)x}_\lav = 4c\ve_z \inp{x}{x}_\lav
$$
for all unit vectors $z \in \laz$, $x \in \lav$. We deduce that
$j(z)^2 = -4c\ve_z\id|_\lav$ for all unit vectors in $\lav$. By linearity, since
$\laz$ is nondegenerate, we obtain that $j(z)^2 = -4c\norm{z}_\laz^2\id|_\lav$ for
all $z \in \laz$. Therefore $(N,\inpe,4c\norm{\cdot}_\laz^2)$ is a modified $H$-type
group.
\end{proof}

\begin{corollary}
The $H$-type and pseudo-$H$-type groups are
characterized, among 2-step nilpotent Lie groups, by the following property: 
Every nondegenerate semi-central plane has sectional curvature $K = \tfrac{1}{4}$.
\eop
\end{corollary}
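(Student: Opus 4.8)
The plan is to recognize this corollary as the special case $c = \tfrac{1}{4}$ of Theorem \ref{scsec2}, packaged together with the forward computation already recorded in the text immediately preceding that theorem. Both directions are short, and the entire mathematical content is carried by Theorem \ref{scsec2}.

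First I would dispatch the forward implication. Suppose $(N,\inpe)$ is of $H$-type or pseudo-$H$-type. By Definitions \ref{htype} and \ref{phtype}, in either case $j(z)^2 = -\norm{z}_\laz^2\id|_\lav$ for every $z \in \laz$; comparing this with \eqref{ghjs}, it says precisely that $(N,\inpe)$ is a modified $H$-type group with quadratic form $\vp = \norme_\laz^2$. Substituting $\vp(z) = \ve_z$ into the semi-central sectional curvature formula \eqref{scsec1} — equivalently, taking $c = \tfrac{1}{4}$ there — yields $K(\Pi) = \tfrac{1}{4}\ve_z\vp(z) = \tfrac{1}{4}\ve_z^2 = \tfrac{1}{4}$ for every nondegenerate semi-central plane $\Pi = \span{z,x}$.

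Next I would handle the converse. Assuming every nondegenerate semi-central plane satisfies $K(\Pi) = \tfrac{1}{4}$, I apply Theorem \ref{scsec2} with $c = \tfrac{1}{4}$ to conclude that the group is modified $H$-type with $\vp = 4\cdot\tfrac{1}{4}\norme_\laz^2 = \norme_\laz^2$. By \eqref{ghjs} this forces $j(z)^2 = -\norm{z}_\laz^2\id|_\lav$ for all $z \in \laz$, which is exactly the defining identity in Definitions \ref{htype} and \ref{phtype}. Hence $(N,\inpe)$ is $H$-type when the metric is Riemannian and pseudo-$H$-type when it is indefinite.

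I do not expect any genuine obstacle here. The only point requiring care is that \emph{$H$-type} and \emph{pseudo-$H$-type} are two definitions distinguished solely by the signature of $\inpe$, so I would remark explicitly that the recovered condition $\vp = \norme_\laz^2$ collapses to the single identity $j(z)^2 = -\norm{z}_\laz^2\id|_\lav$ common to both, with the choice of name dictated only by whether $\inpe$ is definite. This keeps the statement faithful to the dichotomy without demanding a separate argument for each case.
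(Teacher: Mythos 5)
Your proposal is correct and matches the paper's intended argument exactly: the paper states this corollary with no separate proof because it is precisely the case $c=\tfrac{1}{4}$ of Theorem \ref{scsec2} combined with the forward computation in \eqref{scsec1}, which is what you do. Your added remark that the recovered condition $\vp=\norme_\laz^2$ is the common defining identity of Definitions \ref{htype} and \ref{phtype}, with the name determined only by the signature of $\inpe$, is a reasonable clarification and introduces nothing beyond what the paper already assumes.
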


In general, it is possible for modified $H$-type groups to have semi-central
planes that are flat. Indeed, if $\vp(z') = 0$ but $\norm{z'}^2 \neq 0$, then any 
semi-central plane in the direction of $z'$ has $K(\Pi) = 0$.

\begin{remark}
The sectional curvature of nondegenerate semi-central planes depends
only on the central direction $z$ and the quadratic form $\vp$, $K(z,x) =
\tfrac{1}{4} \ve_z\vp(z)$. Therefore, the quadratic form $\vp$ is given by
\begin{equation}
\vp(z) = 4\norm{z}_\laz^2K(z,\bar{\lav})
\end{equation}
for all non-null vectors in $\laz$, where $\bar{\lav}$ represents all non-null
vectors in $\lav$. This further illustrates the close relationship between the
quadratic form $\vp$ and the sectional curvature of $N$. 
\end{remark}

\subsection{\normalsize \textit{Nilsolitons}}

A pseudo-Riemannian metric on a 2-step nilpotent Lie group is
said to be a \emph{nilsoliton} if there exists a constant $c \in \R$ such that
the operator 
\begin{equation}
D\label{der} := \rc + c\cdot\id
\end{equation}
is a derivation of $\n$. That is, $D$ must satisfy
$$
D \br{x}{y} = \br{Dx}{y} + \br{x}{Dy}
$$
for all $x,y \in \n$.
By Theorem \ref{rop}, the Ricci operator on a modified $H$-type group
$(N,\inpe,\vp)$ is given on $\n = \laz \ds \lav$ by
$$
\rc = \begin{pmatrix}
\frac{m}{4}\id|_\vp^\dagger & 0 \\[0.5 ex]
0 & -\frac{\xi}{2}\id|_\lav
\end{pmatrix}.
$$
For $D$ to be a derivation, the following equation must be satisfied
for all $x,y \in \lav$.
\begin{equation}
\rc|_\laz(\br{x}{y})\label{nilder} = (-\xi + c)\br{x}{y}
\end{equation}
This occurs if and only if the derived algebra
$\n' = \br{\n}{\n} = \br{\lav}{\lav}$ is contained in a single eigenspace of
$\rc|_\laz$. Indeed, if $\lambda$ is the corresponding eigenvalue of $\rc|_\laz$,
then put $c = \lambda + \xi$.

We have proved the following.

\begin{theorem}
Let $(N,\inpe,\vp)$ be a modified $H$-type group. The metric $\inpe$ is a
nilsoliton if and only if the derived algebra $\br{\lav}{\lav}$ is contained in
a single eigenspace of the Ricci operator $\rc|_\laz$. \eop
\end{theorem}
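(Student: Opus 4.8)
The plan is to read off the block form of the Ricci operator from Theorem \ref{rop} and then test the derivation identity on the two pieces of the splitting $\n = \laz \ds \lav$. Since $N$ is 2-step nilpotent, the center is abelian, $\br{\laz}{\n} = 0$, and the derived algebra satisfies $\n' = \br{\lav}{\lav} \subseteq \laz$. First I would note that $D = \rc + c\cdot\id$ preserves the splitting, because $\rc$ does; explicitly $D$ acts as $\tfrac{m}{4}\id|_\vp^\dagger + c\,\id|_\laz$ on $\laz$ and as the \emph{scalar} operator $\left(-\tfrac{\xi}{2} + c\right)\id|_\lav$ on $\lav$. It is this scalar action on $\lav$ that drives the whole argument.

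Next I would verify $D\br{x}{y} = \br{Dx}{y} + \br{x}{Dy}$ on pairs drawn from $\laz$ and $\lav$, which suffices by bilinearity. Whenever at least one argument lies in $\laz$, both sides vanish identically: $\br{\laz}{\n} = 0$ kills the left side, and since $D$ maps $\laz$ into $\laz$ and $\lav$ into $\lav$, each term on the right again has a central factor and vanishes. Thus these cases impose no condition. The only substantive case is $x, y \in \lav$. There $\br{x}{y} \in \n' \subseteq \laz$, so the left side is $\rc|_\laz\br{x}{y} + c\,\br{x}{y}$, while the right side picks up the scalar $-\tfrac{\xi}{2} + c$ from each factor, giving $\br{Dx}{y} + \br{x}{Dy} = (-\xi + 2c)\br{x}{y}$. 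Equating the two and cancelling the common $c\,\br{x}{y}$ reproduces exactly equation \eqref{nilder}, namely $\rc|_\laz\br{x}{y} = (-\xi + c)\br{x}{y}$.

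Finally I would interpret this identity. It asserts that every bracket $\br{x}{y}$ is an eigenvector of $\rc|_\laz$ for the \emph{single} eigenvalue $\lambda := -\xi + c$; since such brackets span $\n' = \br{\lav}{\lav}$, this is the same as saying $\n'$ lies inside one eigenspace of $\rc|_\laz$. For the converse I would simply choose $c = \lambda + \xi$ whenever $\n' \subseteq V_\lambda$, recovering the derivation condition and closing the equivalence. The only point that needs genuine care — and the nearest thing to an obstacle — is that one fixed constant $c$ must work simultaneously for \emph{all} brackets; the criterion demands that $\n'$ sit in a \emph{single} eigenspace precisely to encode this, and the scalar nature of $D|_\lav$ is what guarantees that \eqref{nilder} forces one common eigenvalue rather than allowing bracket-dependent ones. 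No properties of $\vp$ or $\id|_\vp^\dagger$ beyond the block structure of Theorem \ref{rop} are required.
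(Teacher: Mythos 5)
Your proposal is correct and follows essentially the same route as the paper: it reads off the block form of $\rc$ from Theorem \ref{rop}, reduces the derivation identity to equation \eqref{nilder} in the only nontrivial case $x,y\in\lav$, and concludes with the choice $c=\lambda+\xi$. You merely spell out more explicitly than the paper why the cases with a central argument are vacuous and why a single constant $c$ forces a single eigenvalue.
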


As a corollary we recover a well known result about (pseudo-) $H$-type groups
\cite{L01,OP}.

\begin{corollary}
Every $H$-type and pseudo-$H$-type metric is a nilsoliton.
\end{corollary}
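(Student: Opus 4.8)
The plan is to deduce this corollary directly from the nilsoliton theorem proved just above, which asserts that a modified $H$-type metric is a nilsoliton precisely when the derived algebra $\br{\lav}{\lav}$ lies inside a single eigenspace of $\rc|_\laz$. So the entire task reduces to understanding the operator $\rc|_\laz$ in the special case of a (pseudo-) $H$-type metric.

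First I would recall from Example \ref{gen} that a (pseudo-) $H$-type metric is exactly the modified $H$-type metric with $\vp = \norme_\laz^2$. Polarizing this quadratic form gives $\scppe = \inpe_\laz$, so in the matrix language of the remark following Theorem \ref{rop} one has $\Phi = E$ and hence $E^{-1}\Phi = I_p$. By \eqref{rvzmat}, this means
$$
\rc|_\laz = \tfrac{m}{4}\,\id|_\laz,
$$
a scalar multiple of the identity on the center.

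The key observation is then immediate: since $\rc|_\laz$ has the single eigenvalue $\tfrac{m}{4}$, its only eigenspace is all of $\laz$. In particular the derived algebra $\br{\lav}{\lav} \subseteq \laz$ is trivially contained in one eigenspace, so the hypothesis of the nilsoliton theorem is satisfied and the metric is a nilsoliton. Following the constant produced in the proof of that theorem, one may take $c = \tfrac{m}{4} + \xi$ (here $\xi = \tr{E^{-1}\Phi} = p$), so that $D = \rc + c\cdot\id$ is a derivation of $\n$.

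I do not expect any genuine obstacle here: the whole content is the computation $\rc|_\laz = \tfrac{m}{4}\,\id|_\laz$, which collapses the eigenspace decomposition of the center to a single block. The only point requiring a line of care is confirming that $\vp = \norme_\laz^2$ really forces $\Phi = E$, that is, that polarizing the norm-squared recovers the metric $\inpe_\laz$ on the center, after which the corollary is automatic.
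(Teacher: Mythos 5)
Your proposal is correct and follows essentially the same route as the paper: both arguments reduce to observing that for $\vp = \norme_\laz^2$ one has $\rc|_\laz = \tfrac{m}{4}\,\id|_\laz$, so all of $\laz$ is a single eigenspace and the nilsoliton theorem applies. Your extra step of deriving this via $\Phi = E$ and \eqref{rvzmat} is exactly the computation the paper records in the remark following Theorem \ref{rop}, and your constant $c = \tfrac{m}{4} + \xi$ with $\xi = p$ matches the paper's prescription $c = \lambda + \xi$.
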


\begin{proof}
The Ricci operator for an $H$-type or pseudo-$H$-type group is given by
$$
\rc = \begin{pmatrix}
\frac{m}{4}\id|_\laz & 0 \\
0 & -\frac{p}{2}\id|_\lav
\end{pmatrix},
$$
so that every $z \in \laz$ is and eigenvector of $\rc|_\laz$.
\end{proof}

Given a nilsoliton of modified $H$-type, we can construct many.

\begin{corollary}
If a modified $H$-type group admits a nilsoliton, then every trivial central extension
also admits a nilsoliton.
\end{corollary}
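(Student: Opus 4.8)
The plan is to produce, for an arbitrary trivial central extension, a metric and quadratic form exhibiting it as a modified $H$-type group whose derived algebra sits inside a single eigenspace of the restricted Ricci operator; the preceding nilsoliton theorem then furnishes the nilsoliton. Write the extension as the group $\tilde N = N \times \R^k$ with Lie algebra $\tilde\n = \n \ds \R^k$, the summand $\R^k$ central and the bracket inherited unchanged from $\n$, so that its center is $\tilde\laz = \laz \ds \R^k$ while $\tilde\lav = \lav$. Equip $\tilde N$ with the left-invariant metric induced by extending $\inpe$ so that $\R^k$ is nondegenerate and orthogonal to $\n$ (keeping the whole center nondegenerate), and extend $\vp$ to the quadratic form $\tilde\vp$ agreeing with $\vp$ on $\laz$ and vanishing on $\R^k$.

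First I would check that $(\tilde N, \inpe, \tilde\vp)$ is again of modified $H$-type. Since $\R^k$ is central and the bracket is unchanged, $\ad{x}y = \br{x}{y} \in \laz$ for all $x, y \in \lav$, whence $j(w) = 0$ for every $w \in \R^k$, while $j(z)$ is unchanged for $z \in \laz$. Thus $j(z+w)^2 = j(z)^2 = -\vp(z)\,\id|_\lav$, and this equals $-\tilde\vp(z+w)\,\id|_\lav$ exactly when the cross term $\scpp{z}{w}$ vanishes; but \eqref{jmpol3} gives $-2\scpp{z}{w}\,\id|_\lav = j(z)j(w) + j(w)j(z) = 0$, so the cross terms vanish automatically and $\tilde N$ is modified $H$-type.

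Next I would compute $\tilde\rc|_{\tilde\laz}$ from Theorem \ref{rop}. Since $\dim\tilde\lav = \dim\lav = m$ is unchanged, and in the block basis adapted to $\tilde\laz = \laz \ds \R^k$ the metric and polarized form are $\tilde E = \text{diag}(E, E')$ and $\tilde\Phi = \text{diag}(\Phi, 0)$, formula \eqref{rvzmat} yields
\begin{equation*}
\tilde\rc|_{\tilde\laz} = \tfrac{m}{4}\,\tilde E^{-1}\tilde\Phi = \begin{pmatrix} \rc|_\laz & 0 \\ 0 & 0 \end{pmatrix}.
\end{equation*}
In particular $\tilde\rc|_{\tilde\laz}$ preserves $\laz$ and restricts there to $\rc|_\laz$.

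To finish, note that the derived algebra is unaffected: $\br{\tilde\lav}{\tilde\lav} = \br{\lav}{\lav} \subseteq \laz$. By hypothesis and the nilsoliton theorem it lies in a single eigenspace of $\rc|_\laz$, for some eigenvalue $\lambda$; since $\tilde\rc|_{\tilde\laz}$ agrees with $\rc|_\laz$ on $\laz$, every element of $\br{\tilde\lav}{\tilde\lav}$ is an eigenvector of $\tilde\rc|_{\tilde\laz}$ for $\lambda$, so the derived algebra of the extension lies in a single eigenspace of $\tilde\rc|_{\tilde\laz}$. The nilsoliton theorem then produces the desired nilsoliton. The only step demanding real care is showing the extension remains modified $H$-type, specifically the automatic vanishing of the cross terms through \eqref{jmpol3}; after that, the Ricci computation and the eigenspace bookkeeping follow immediately from the block structure.
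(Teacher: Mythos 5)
Your proposal is correct and follows essentially the same route as the paper: extend the metric so the new central summand is nondegenerate and orthogonal, extend $\vp$ by zero there, observe that the Ricci operator on the center restricts to the old $\rc|_\laz$, and conclude that $\br{\lav}{\lav}$ still lies in a single eigenspace. You simply spell out two steps the paper asserts without detail (the verification that the extension is again of modified $H$-type, and the block form of $\tilde E^{-1}\tilde\Phi$), which is fine.
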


\begin{proof}
Let $(N,\inpe,\vp)$ be a modified $H$-type group such that $\inpe$ is a nilsoliton.
Let $\lal{m}$ be a real vector space endowed with an inner product $\inpe_\lal{m}$
of any signature, and consider the trivial central extension of $\n$ given by
$\bar{\n} = \n \ds \lal{m}$. The simply connected Lie group $\bar{N} = \exp \bar{\n}$
endowed with the left-invariant metric $\inpe + \inpe_\lal{m}$ is a modified $H$-type 
algebra with $\vp(\lal{m}) = 0$. The Ricci operator on the center remains 
unchanged on $\laz$, so $\br{\lav}{\lav}$ remains contained in a single eigenspace of
$\rc$.
\end{proof}

\section*{Acknowledgements}  

This work was started while the author was a postdoctoral research fellow with 
CONICET at Universidad Nacional de Rosario, in Rosario, Argentina. The author would like 
to thank Gabriela Ovando and Phil Parker for their guidance and support.

\vfill

\nin Justin M. Ryan \\
\nin 3 August 2021 \\
\nin {\sf justin@geometerjustin.com}

\begin{thebibliography}{99}
	\frenchspacing
	


\bibitem{Ci}
P. Ciatti, Scalar products on Clifford modules and pseudo-$H$-type
Lie algebras, {\it Ann. Mat. Pura Appl.} {\bf 178} (2000) 1--31.

\bibitem{CP5}
L.\,A.\,Cordero and P.\,E.\,Parker, Isometry Groups of PseudoRiemannian
2-Step Nilpotent Lie Groups. {\it Houston J. Math} {\bf 35} (2009) 49--72.

\bibitem{CP6}
L.\,A.\,Cordero and P.\,E.\,Parker, Lattices and periodic
geodesics in pseudoriemannian 2-step nilpotent Lie groups, {\it Int. J. of
Geom. Meth. in Mod. Phys.} {\bf 5} (2008) 79--99.

\bibitem{CP4}
L.\,A.\,Cordero and P.\,E.\,Parker, Pseudoriemannian 2-step
nilpotent Lie groups, \DGS\ preprint, Wichita: 2000.
{\sf arXiv:\,math/0604298}


\bibitem{BO}
V. del Barco and G.\,P. Ovando, Isometric actions on pseudo-Riemannian
nilmanifolds, {\it Ann. Global Anal. Geom.} {\bf 45} (2013) 95--110.


\bibitem{E1}
P. Eberlein, Geometry of 2-step nilpotent groups with a left invariant 
metric, {\it Ann. scien. de l'E.N.S.} {\bf 27} (1994) 611--660.

\bibitem{E2} P. Eberlein, Geometry of 2-step nilpotent groups with a left invariant
metric. II. {\it Trans. Amer. Math. Soc.} {\bf 343} (1994) 805--828. 








\bibitem{K1}
A.\,Kaplan, Fundamental solutions for a class of hypoelliptic PDE
generated by composition of quadratic forms, {\it Trans. of the A.M.S.} {\bf 258}
(1980) 147--153.

\bibitem{K2}
A.\,Kaplan, Riemannian nilmanifolds attached to Clifford modules,
{\it Geom. Dedicata} {\bf 11} (1981) 127--136.

\bibitem{K3}
A.\,Kaplan, On the geometry of groups of Heisenberg type, {\it Bull.
London Math. Soc.} {\bf 15} (1983) 35--42.


\bibitem{L99}
J.\, Lauret, Modified H-type groups and symmetric-like Riemannian spaces,
{\it Diff. Geom. App.} {\bf 10} (1999) 121--143.

\bibitem{L01}
J.\, Lauret, Ricci soliton homogeneous nilmanifolds, {\it Math. Ann,} 
{\bf 319} (2001) 715--733.



\bibitem{OP}
K. Onda and P.\,E. Parker, Nilsolitons of H-type in the Lorentzian setting, Wichita: 2015.
{\sf arXiv: math/1205.6109}



\bibitem{Prip}
G.T. Pripoae,  Comparison theorem for generalized Heisenberg groups,
{\it Proceedings of The Conference of Geometry and Its Applications in Technology
and The Workshop on Global Analysis, Differential Geometry and Lie Groups}, 1999,
pp. 170--5
\end{thebibliography}
\end{document}